\documentclass[12pt]{amsart}

\usepackage{MnSymbol}
\usepackage{mathtools}
\usepackage{csquotes}
\usepackage{stackengine,scalerel}

\newcommand{\N}{\mathbb N}

\newcommand{\Z}{\mathbb Z}
\newcommand{\E}{\mathbb E}

\newtheorem{theorem}{Theorem}[section]
\newtheorem{lemma}[theorem]{Lemma}
\newtheorem{prop}[theorem]{Proposition}
\newtheorem{cor}[theorem]{Corollary}
\newtheorem{corollary}[theorem]{Corollary}

\newtheorem{assumption}{Assumption}

\theoremstyle{definition}

\newtheorem{example}[theorem]{Example}

\theoremstyle{remark}
\newtheorem{remark}[theorem]{Remark}

\newcommand{\rd}{{\mathbb R^d}}
\newcommand{\rr}{{\mathbb R}}

\DeclareMathOperator{\supp}{supp}

\usepackage{comment}
\usepackage{tikz}
\usepackage{float}
\usepackage{relsize}
\usepackage[T1]{fontenc}
\usepackage[english]{babel}
\usepackage{color}
\usepackage{hyperref}
\usepackage{graphicx}

\usepackage[capitalize]{cleveref}

\usepackage{xparse}
\let\realItem\item % save a copy of the original item
\makeatletter
\NewDocumentCommand\myItem{ o }{%
	\IfNoValueTF{#1}%
	{\realItem}% add an item
	{\realItem[#1]\def\@currentlabel{#1}}% add an item and update label
}
\makeatother

\usepackage{enumitem}    
\setlist[enumerate]{
	before=\let\item\myItem,       % use \myItem in enumerate
	label=\textnormal{(\arabic*)}, % format the label
	widest=(2')                    % set the widest label
}
\usepackage{xcolor}
\usepackage[capitalize]{cleveref}
\usepackage{hyperref}

\usepackage{graphicx}
\usepackage{tikz}
\usepackage{atbegshi}

\AtBeginShipoutFirst{%
	\begin{tikzpicture}[remember picture, overlay]
		\node[anchor=north west, xshift=1in, yshift=-1.5cm] at (current page.north west) {%
			\includegraphics[width=2cm]{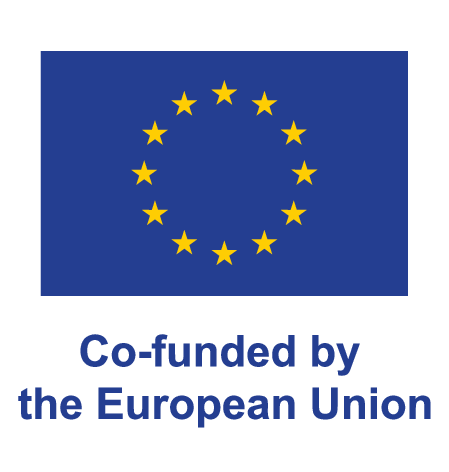} % Change filename, size or position as needed
		};
	\end{tikzpicture}%
}
\usepackage[normalem]{ulem}

\date{\today}

\begin{document}

%	\sloppy

	\title[Lower path regularity]{Lower path regularity in all dimensions} 
	
	\author[M. Hinz]{Michael Hinz$^1$}	
	\address{$^1$ Bielefeld University, Faculty of Mathematics, Postfach 100131, 33501 Bielefeld, Germany}
	\email{mhinz@math.uni-bielefeld.de}
	
	\author[J. M. T\"olle]{Jonas M. T\"olle$^2$}
	\address{$^2$ Aalto University, Department of Mathematics and Systems Analysis, P.O. Box 11100 (Otakaari 1, Espoo), 00076 Aalto, Finland}
	\email{jonas.tolle@aalto.fi}
	
	\author[L. Viitasaari]{Lauri Viitasaari$^3$}
	\address{$^3$ Aalto University, Department of Information and Service Management, P.O. Box 21210 (Ekonominaukio 1, Espoo), 00076 Aalto, Finland}
	\email{lauri.viitasaari@aalto.fi}
	
	\thanks{The research of MH was supported in part by the DFG CRC 1283, \enquote{Taming uncertainty and profiting from randomness and low regularity in analysis, stochastics and their applications}. JMT gratefully acknowledges travel support by the Magnus Ehrnrooth Foundation. JMT was partially supported by the European Union's Horizon Europe research and innovation programme under the Marie Sk\l{}odowska-Curie Actions Staff Exchanges (Grant agreement no.~101183168 -- LiBERA, Call: HORIZON-MSCA-2023-SE-01).}
	
	\begin{abstract}
		We prove precise almost sure lower path regularity results for a wide class of stochastic processes in \emph{all} space dimensions $d\geq 1$. Examples include Gaussian processes, in particular, fractional Brownian motions with Hurst index $H\in (0,1)$, Rosenblatt processes, and solutions to stochastic differential equations driven by fractional Brownian motions with Hurst index $H\in (\frac{1}{4},1)$, all in arbitrary dimensions $d\ge 1$. Our key tool is a new continuity result for Riesz potentials of occupation measures, which we use as substitutes for local times. 
	\end{abstract}
	
	\keywords{Path regularity, occupation measures, Riesz potentials, Gaussian processes, Rosenblatt processes, stochastic differential equations}
	\subjclass[2020]{28A15, 31B15, 60G15, 60G17, 60G22}
	\maketitle
	\tableofcontents
	
	\allowdisplaybreaks

	\section{Introduction}
	
	The almost sure continuity of sample paths is a key research topic in the area of stochastic processes, widely addressed in the literature and with numerous applications. For example, the H\"older continuity of sufficiently high order may be used to define pathwise integrals in the Young \cite{zahle} or rough path sense, see \cite{friz}. A classical result on the almost sure sample path continuity is the famous Kolmogorov-Chentsov criterion, see for instance \cite[Theorem 3.23]{Kallenberg} or \cite[Theorems 1.4.1 and 1.4.4]{Kunita1990}, which allows to translate moment conditions into pathwise upper H\"older regularity. For Gaussian processes, this criterion is known to be also necessary \cite{Azmoodeh-etal}. In the classical case of Brownian motion, more precise path continuity results are provided by L\'evy's modulus of continuity and the law of the iterated logarithm, see for instance \cite[Theorem 1.1.1, resp. Theorem 1.3.3]{CsorgoRevesz1981}. For general Gaussian processes, results on the modulus of continuity were proved in \cite{Dudley1, Dudley2, Fernique, Talagrand}, results related to the law of the iterated logarithm in \cite{Arcones1995, Arcones}. Recently, in \cite{Nummi-Viitasaari}, it was observed that the Kolmogorov-Chentsov criterion is both sufficient and necessary to obtain H\"older continuity results for general hypercontractive processes, extending the results of \cite{Azmoodeh-etal} beyond the Gaussian case. In \cite{Nummi-Viitasaari} the authors  provided a modulus of continuity, exact up to a logarithmic factor, cf. \cite[Corollary 2.11]{Nummi-Viitasaari}. 
	
	To obtain almost sure lower path regularity bounds is a much more challenging task. A prototype result in the classical case of Brownian motion is \cite[Theorem 1.6.1]{CsorgoRevesz1981} by Cs\"org\H{o} and R\'ev\'esz. A well-known way to obtain almost sure lower path regularity is through local times \cite{berman,berman2, GemanHorowitz1980} which are, roughly speaking, defined as the Radon-Nikodym derivatives of the occupation measures with respect to the Lebesgue measure \cite{berman,berman2, GemanHorowitz1980}. Upper regularity results for local times were obtained in \cite{Kesten1965} and \cite{Perkins1981} in the classical case of Brownian motion. For a fairly general class of $\mathbb{R}^d$-valued Gaussian random fields, similar results were shown in \cite[Theorems 1.1 and 1.2]{Xiao1997}. They translate naturally into lower path regularity results for the random field itself \cite[Theorem 3.1]{Xiao1997}. In the special case of the $\mathbb{R}^d$-valued fractional Brownian motion with Hurst index $H\in (0,1)$ the mere existence of local times requires the dimensional condition $Hd<1$ to be satisfied. In \cite{knsv}, the scope of such results was broadened to include also non-Gaussian examples, in particular the Rosenblatt process \cite{Rosenblatt1961}. This process is defined for $H\in \left(\frac12,1\right)$, and since the existence of its local times again needs $Hd<1$, one can only consider the case $d=1$. Inspired by studies of Besov regularity of local times \cite{bouf2,bouf1}, the existence and regularity of local times for a wide class of stochastic processes $X=(X_t)_{t\in [0,T]}$ was investigated further in \cite{SSV}. These results apply to $\mathbb{R}^d$-valued fractional Brownian motions with $Hd<1$, to the Rosenblatt process in dimension $d=1$, and to solutions of stochastic differential equations driven by the $\mathbb{R}^d$-valued fractional Brownian motion with $H\in \left(\frac14,\frac{1}{d}\right)$.

	In the present article, we revisit the class of processes $X=(X_t)_{t\in [0,T]}$ studied in \cite{SSV}, that is, we consider hypercontractive H\"older continuous processes whose increments have characteristic functions satisfying certain scaling and decay properties, see Assumption \ref{a1}.	
We now provide results on almost sure lower path regularity for \emph{all} space dimensions $d\geq 1$. The new key idea is to replace local times by $\alpha$-Riesz potentials of occupation measures. Unlike local times, these potentials exist in $\mathbb{R}^d$ for any $d\geq 1$, so that the severe dimensional restriction incurred with the existence of local times becomes obsolete. Our first main statement is Theorem \ref{main}, in which we prove natural upper regularity results for $\alpha$-Riesz potentials of occupation measures; they parallel \cite[Theorems 1.1 and 1.2]{Xiao1997}. In fact, Theorem \ref{main} is formulated in a way that includes the known local time case, cf. \cite[Theorem 2.1]{SSV}, as the \enquote{limit case} $\alpha=0$, provided that the dimensional condition for the existence of local times is met. In our second main statement, Theorem \ref{cm2}, we give the desired almost sure lower path regularity results for the process $X=(X_t)_{t\in [0,T]}$ itself. They match the known upper path regularity up to possibly different powers of the logarithmic fluctuation terms, see Remark \ref{rem:upper-regularity}. Similarly as before, the choice $\alpha=0$ in the local time case recovers known results, see \cite[p. 147]{Xiao1997} and \cite[Corollary 2.6]{SSV}.
	To illustrate these main statements, we formulate their consequences for Gaussian processes, Rosenblatt processes and solutions to fractional Brownian stochastic differential equations in detail in Theorems \ref{main4}, \ref{main3} and \ref{main2}. In the auxiliary Theorem \ref{thm:holder}, we provide an almost sure H\"older regularity result for Riesz potentials of occupation measures. 
	
	Recently there has been a growing interest in occupation measures and their regularity properties, partially motivated by well-known applications in stochastic analysis, such as regularization by noise \cite{CatellierGubinelli2016, Flandoli2011,GG2022,GG2024,HarangPerkowski2021,RomitoTolomeo2026}
	and existence results for pathwise integrals with discontinuous integrands \cite{Garzon2017,Garzon2023,HTV2022,HTV2023, HTVrough, HTV2025}. The present article continues our series of papers \cite{HTV2022,HTV2023, HTVrough, HTV2025}; they are all linked to the idea of viewing potentials of occupation measures as less precise, \enquote{smeared} variants of local times. This idea is supported by the straightforward observation that local times, if they exist, can be represented as limits of rescaled Riesz potentials of occupation measures, see Lemma \ref{L:abscont} below. A similar kind of result holds even if local times do not exist, but only so-called average densities \cite{BedfordFisher1992,Falconer1997, FalconerXiao1995, Zaehle2001} exist. We take the opportunity to state and prove this observation in the auxiliary Lemma \ref{L:limitsofpots}.
	
	In Section \ref{S:contandlowreg}, we state our main assumptions and main results; applications to specific processes are then discussed in Section \ref{S:apps}. Limits of potentials are considered in Section \ref{S:diff}. Proofs of the main results are provided in Section \ref{S:proofs}. 
	
	We write $\mathcal{L}^d$ for the Lebesgue measure in $\mathbb{R}^d$ and $B(x,r)$ for the open ball with radius $r>0$ and center $x\in\mathbb{R}^d$.
	The volume of the unit ball in $\mathbb{R}^d$ is denoted by $\omega_d$, the Euclidean norm on $\mathbb{R}^d$ by $\|\cdot \|$.
	
	\section{Continuity of potentials and lower path regularity}\label{S:contandlowreg}
	
	Let $T>0$ be fixed throughout the entire article. We consider an $\mathbb{R}^d$-valued stochastic process $X=(X_t)_{t\in[0,T]}$ on a probability space $(\Omega,\mathcal{F},\mathbb{P})$. Given $0\leq s<t\leq T$, we write $\mu_{s,t}^X$ for the \emph{occupation measure} of $X$ within the interval $[s,t]$; it is defined as the random measure
	\[\mu_{s,t}^X(A):=\mathcal{L}^1(\{u\in [s,t]:\ X_u\in A\}),\qquad \text{$A\subset\mathbb{R}^d$ Borel.}\]
	For any fixed Borel set $A\subset\mathbb{R}^d$ and $\omega\in \Omega$, the map 
	\[[s,t]\to \mu_{s,t}^{X(\omega)}(A)=\int_s^t\mathbf{1}_A(X_u(\omega))du\] 
	defines a finite Borel measure on $[0,T]$, which is absolutely continuous with respect to $\mathcal{L}^1$. Clearly $\mu_{s,t}^X=\mu_{0,t}^X-\mu_{0,s}^X$.
	
	We say that \emph{$X$ has continuous local times} if there is a random field 
	\[\{L_t^X(x): t\in [0,T], x\in \mathbb{R}^d\}\] 
	such that $(t,x)\mapsto L_t^X(x)$ is $\mathbb{P}$-a.s. continuous on $[0,T]\times \mathbb{R}^d$ and for any $t\in [0,T]$ we have 
	\[\mu_{0,t}^X(dx)=L_t^X(x)\mathcal{L}^d(dx)\qquad \text{$\mathbb{P}$-a.s.}\]

	For any $0<\alpha<d$ the $\alpha$-Riesz potential of $\mu_{s,t}^X$ is the $[0,+\infty]$-valued function 
	\begin{equation}\label{E:occupot}
		U^\alpha\mu_{s,t}^X(x):=\int_{\mathbb{R}^d}\|x-y\|^{\alpha-d}\mu_{s,t}^X(dy),\qquad x\in\mathbb{R}^d.
	\end{equation}
	For convenience, we omit the customary multiplicative constant.
	For any fixed $x\in\mathbb{R}^d$, the map
	\begin{equation}\label{E:measuresonline}
		[s,t]\mapsto U^\alpha\mu_{s,t}^X(x)=\int_s^t \|x-X_u\|^{\alpha-d}du
	\end{equation}
	defines a Borel measure on $[0,T]$, which is absolutely continuous with respect to $\mathcal{L}^1$, although not necessarily finite.
	
	If $X$ has continuous local times, then, roughly speaking, $L_{t}^X$ can be regarded as the limit of $\frac{\alpha}{d\omega_d} U^\alpha\mu_{0,t}^X$
	as $\alpha\to 0$; a rigorous statement is provided in Lemma \ref{L:abscont}. Motivated by this observation, we set, for all $0\leq s<t\leq T$, $x\in\mathbb{R}^d$, 
	\[L^{\alpha,X}(x, [s,t]):=\begin{cases} \frac{\alpha}{d\omega_d} U^\alpha\mu_{s,t}^X(x)\ &\text{for $0<\alpha<d$},\\ 
		L_t^X(x)-L_s^X(x)\ &\text{for $\alpha=0$.}\end{cases}\]
	
	Our main result is Theorem \ref{main} below. It provides upper density bounds for the measures \eqref{E:measuresonline}, which can also be seen as continuity in time properties of the potentials in \eqref{E:occupot}. We state Theorem \ref{main} under the following assumption.
	
	\begin{assumption}\label{a1}	 Let $H\in (0,1)$.
		\begin{enumerate}%[label={(\roman*)},itemindent=1em]
			\item[(i)] \emph{(decay and scaling of the characteristic function)}
			There exist $c_0>0$ and $\theta\geq 0$ such that for any $n\in \N$ and $\xi_j = (\xi_j^1, \ldots, \xi_j^d) \in (\rr\setminus \{0\} )^d$, $j=1, \ldots, n$,  any partition  $0=t_0  <t_1 < \ldots < t_n<T$, and any $k_{j,\ell} \in \{0,4\}$ the inequality
			\begin{align*}
				\Big\lvert  \mathbb{E} \Big[ \exp \Big( i \sum_{j=1}^n \langle \xi_j, X_{t_{j}}-X_{t_{j-1}}\rangle \Big) \Big]\Big\rvert  \leq c_0^n n^{H\theta n}\prod_{j=1}^n \prod_{\ell=1}^d\frac{1}{|\xi_j^\ell|^{k_{j,\ell}} 
					(t_j-t_{j-1})^{H k_{j,\ell}}}
			\end{align*}
			holds.
			\item[(ii)]  \emph{(hypercontractivity)} There exist $c_1\geq 0$ and $\iota\geq 0$ such that for any $p\geq 1$ and all $0\leq s<t\leq T$ we have
			$$
			\mathbb{E}\Vert X_t-X_s\Vert^p \leq c_1^p p^{\iota p}|t-s|^{Hp}.
			$$
		\end{enumerate}
	\end{assumption}	
	
	This assumption is satisfied for various classes of processes which in general do not have local times, see Section \ref{S:apps} below.

	\begin{theorem} \label{main}
		Suppose that $X=(X_t)_{t\in[0,T]}$ satisfies Assumption \ref{a1} and that
		\begin{equation}\label{E:parameters}
			0\leq \alpha<d\quad\text{if $Hd<1$}\qquad\text{respectively}\qquad \max\big(0,d-\frac{1}{H}\big)<\alpha<d\quad\text{if $Hd\geq 1$}.
		\end{equation}
		
		\begin{enumerate}
			\item[(i)] For any $t \in (0,T)$  there is a constant $c_2(t)>0$ such that
			\begin{equation}
				\label{eq:main-fixed-s} 
				\limsup _{r \to 0}\: \sup_{x\in\rr^d}\:\frac{L^{\alpha,X}(x,[t-r,t+r])}{r^{1-H(d-\alpha)} (\log \log r^{-1})^{H(\theta+d-\alpha)}} \leq c_2(t)\qquad \text{$\mathbb{P}$-a.s.}
			\end{equation}
			\item[(ii)] There is a constant $c_3>0$ such that
			\begin{equation}
				\label{eq:main-sup-s} 
				\limsup _{r \to 0} \sup_{t\in(0,T)} \sup_{x\in\rr^d} \frac{L^{\alpha,X}(x,[t-r,t+r])}{r^{1-H(d-\alpha)} (\log r^{-1})^{H(\theta+d-\alpha)}} \leq c_3 \qquad \text{$\mathbb{P}$-a.s.}
			\end{equation}
		\end{enumerate}
	\end{theorem}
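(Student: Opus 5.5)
The plan is to follow the classical local-time scheme of Xiao \cite{Xiao1997} and \cite{SSV}: derive high-moment bounds for $L^{\alpha,X}(x,I)$, pass them to exponential tails, and conclude with a chaining/Borel--Cantelli argument. Let $I=[t-r,t+r]$ and $0<\alpha<d$; the case $\alpha=0$ is \cite[Theorem 2.1]{SSV}. The first step is a moment estimate of the form
\[
\E\big[L^{\alpha,X}(x,I)^n\big]\le C^n (n!)^{H(\theta+d-\alpha)}\, |I|^{n(1-H(d-\alpha))},
\]
uniform in $x\in\rd$. To obtain it, I write $\|z\|^{\alpha-d}=c_{d,\alpha}\int_{\rd}\|\xi\|^{-\alpha}e^{i\langle \xi,z\rangle}d\xi$ in the sense of distributions. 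Taking $n$-th moments and ordering times $u_1<\dots<u_n$ (a factor $n!$) gives
\[
\E[U^\alpha\mu_I(x)^n]=c^n n!\int_{u_1<\dots<u_n}\int_{(\rd)^n}\prod_j\|\xi_j\|^{-\alpha}\,\E\Big[e^{i\sum_j\langle \xi_j,X_{u_j}-x\rangle}\Big]d\xi\,du .
\]
Next I change variables to $\eta_j=\xi_j+\dots+\xi_n$, so the exponent becomes $\sum_j\langle\eta_j,X_{u_j}-X_{u_{j-1}}\rangle$; the first increment is taken from a fixed reference time, since the partition in Assumption \ref{a1}(i) starts at $0$. Assumption \ref{a1}(i) is then applied coordinatewise with the choice $k_{j,\ell}=0$ when $|\eta_j^\ell|(u_j-u_{j-1})^H\le1$ and $k_{j,\ell}=4$ otherwise. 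Each coordinate thus gains $\min(1,(|\eta^\ell|\Delta u^H)^{-4})$, which is integrable.

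The weights $\prod\|\xi_j\|^{-\alpha}$ with $\xi_j=\eta_j-\eta_{j+1}$ are handled by the elementary inequality $\|\eta_j-\eta_{j+1}\|^{-\alpha}\le \ldots$. I use the standard trick of bounding $\prod_j\|\eta_j-\eta_{j+1}\|^{-\alpha}$ by a sum of at most $2^n$ products in which each $\|\eta_j\|^{-\alpha}$ appears with exponent in $\{0,\alpha,2\alpha\}$. After integrating in $\eta_j$, this yields factors $\Delta u_j^{-H(d-\beta_j)}$ with $\beta_j\in\{0,\alpha,2\alpha\}$ averaging to $\alpha$. This step requires care that local integrability near $0$ holds ($2\alpha<d$, or else the exponents must be redistributed).

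The time integral $\int_{\text{simplex}}\prod_j\Delta u_j^{-H(d-\beta_j)}du$ is then a Dirichlet integral. It is finite precisely when $H(d-\beta_j)<1$, which is where condition \eqref{E:parameters}, $\alpha>d-1/H$, enters. It equals $|I|^{n(1-H(d-\alpha))}\prod\Gamma(\cdot)/\Gamma(1+n(1-H(d-\alpha)))$, and together with $n^{H\theta n}$ and the $n!$ this gives the claimed factorial power. I expect this combinatorial handling of the Riesz weights, with the $n!$ bookkeeping, to be the main obstacle.

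From the moments, Chebyshev and Stirling give
\[
\mathbb P\big(L^{\alpha,X}(x,I)>|I|^{1-H(d-\alpha)}u^{H(\theta+d-\alpha)}\big)\le e^{-cu}.
\]
For (i), I take $r_k=2^{-k}$ and $u=A\log\log r_k^{-1}$, and cover the relevant range of $x$ by a grid of mesh $r_k^{\gamma}$. The range is bounded because $\sup_{I}\|X_s-X_t\|\lesssim r^{H-\epsilon}$ by Assumption \ref{a1}(ii) and Kolmogorov; away from the path, $L^{\alpha,X}$ is trivially small. Passing from the grid to all $x$ needs Hölder continuity of $x\mapsto U^\alpha\mu_I(x)$, obtained from a second moment bound on increments using $|\|z\|^{\alpha-d}-\|z'\|^{\alpha-d}|$ Fourier estimates, equivalently from Theorem \ref{thm:holder}. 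A polynomial number of grid points is beaten by $e^{-cA\log\log}=k^{-cA}$ only if the grid cardinality stays controlled. This forces the standard refinement: chaining in $x$ as in Xiao, with the grid count entering through $\log\log$ for fixed $t$ and $\log$ in (ii). Borel--Cantelli then handles $r_k$, and monotonicity in $r$ interpolates between consecutive $r_k$. For (ii), I additionally take a union over $t$ in a grid of mesh $r_k$ (at most $2^k$ points) with $u=A\log r_k^{-1}$, using that every interval $[t-r,t+r]$ is contained in a dyadic-neighbor interval of comparable length.
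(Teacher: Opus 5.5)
Your overall architecture (Fourier moment bounds, exponential tails, chaining in $x$ near the path, Borel--Cantelli along $r=2^{-k}$) is the same as the paper's. However, the step you flag as the main obstacle fails as stated, and it is exactly the new ingredient of the paper. No pointwise bound of $\prod_j\|\eta_j-\eta_{j+1}\|^{-\alpha}$ by sums of products $\prod_j\|\eta_j\|^{-\beta_j}$ can hold. For a negative exponent the singularity lies on the diagonal $\eta_j=\eta_{j+1}$, where $\|\eta_j\|$ and $\|\eta_{j+1}\|$ are in general nonzero, so the left-hand side is infinite where the right-hand side is finite. The splitting $\|\eta_j-\eta_{j+1}\|^{\beta}\le\|\eta_j\|^{\beta}+\|\eta_{j+1}\|^{\beta}$ you are imitating only works for positive exponents. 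Even formally, your expansion contains terms with some $\beta_j=0$ (for instance, $\eta_1$ occurs in a single factor). Integrating the characteristic-function bound in such an $\eta_j$ gives $(u_j-u_{j-1})^{-Hd}$, which is not integrable over the simplex when $Hd\ge 1$, the regime that is new in the theorem. A Dirichlet integral needs $H(d-\beta_j)<1$ for every $j$; that the $\beta_j$ average to $\alpha$ does not help. The same problem recurs for the increment moments the chaining needs. There the weight is $\prod_j\|\xi_j\|^{\beta-\alpha}$ with $\beta<\alpha$, and you need all moments, not a second-moment bound, to get exponential tails.

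The missing idea is to give \emph{every} factor the full gain, i.e.\ to obtain $\prod_j(u_j-u_{j-1})^{-H(d-\alpha)}$. The paper does this by H\"older's inequality in $\xi$ with $1/q'=1-(\alpha-\beta)/d$. The weight is split off as $\big(\prod_k\int_{\mathbb{R}^d}\|\xi_k\|^{-d}\psi_\gamma(\|\xi_k\|)^{-q}\,d\xi_k\big)^{1/q}\le C^n$, where $\psi_\gamma$ (equal to $s^{-\gamma}$ near $0$ and to $\log(e+s)$ near $\infty$) removes the borderline divergence of $\|\xi\|^{-d}$. Proposition \ref{prop:basic-estimate} then bounds the $q'$-th power of the characteristic function against $\prod_k\psi_\gamma^{q'}$ essentially by $C^n n^{H\theta n q'}\prod_k(t_k-t_{k-1})^{-Hd}$. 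On $B_1(\eta_{k+1})$ the residual singularity $\|\eta_k-\eta_{k+1}\|^{-\gamma q'}$ is integrable because $\gamma q'<d$. After taking the $1/q'$-th root, each factor becomes $(t_k-t_{k-1})^{-Hd/q'}=(t_k-t_{k-1})^{-H(d-\alpha+\beta)}$. Within your own set-up a shorter repair also works: integrate out $\eta_1,\eta_2,\dots,\eta_n$ in this order, using $\sup_{y\in\mathbb{R}^d}\int\|\eta-y\|^{-\alpha}\prod_{\ell}\min\big(1,(|\eta^\ell|\Delta^H)^{-4}\big)\,d\eta\le C\Delta^{-H(d-\alpha)}$. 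This follows by splitting at $\|\eta-y\|=\Delta^{-H}$, and for increments you use $\alpha-\beta$ in place of $\alpha$. Two further points in your chaining for part (i) need fixing. First, the grid must be centered at the random point $X_t$, so you need tail bounds for $L^{\alpha,X}(x+X_t,I)$ with $t\in I$ (the paper's Corollary \ref{thm:moment}). Second, its radius must come from the modulus $\sup_{s\in[t,t+2^{-n}]}\|X_s-X_t\|\lesssim 2^{-nH}n^{\iota}$ rather than from $r^{H-\epsilon}$. Otherwise a grid of mesh about $2^{-nH}(\log n)^{-H}$ has about $2^{n\epsilon d}$ points, which the tail $n^{-cA}$ cannot beat.
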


	\begin{remark} For $\alpha=0$ condition \eqref{E:parameters} requires $dH<1$. In this case the process $X$ has continuous local times and Theorem \ref{main} reproduces the corresponding results stated in \cite[Theorem 2.1]{SSV}. 
	\end{remark}
	
	Proceeding similarly as in \cite{knsv} we obtain the following new result on lower path regularity, which is our second main result.

	\begin{theorem} \label{cm2}
		Suppose that $X=(X_t)_{t\in[0,T]}$ satisfies Assumption \ref{a1} and that $\alpha$ is as in \eqref{E:parameters}. 
		\begin{enumerate}
			\item[(i)] For any $t \in (0,T)$  there is a constant $c_4(t)>0$ such that
			\begin{equation}\label{E:output}		
				\liminf _{r \to 0} \sup_{s \in (t-r,t+r)} \frac{\|X_t - X_s \| }{r^H(\log\log r^{-1})^{-H(\frac{\theta}{d-\alpha}+1)}} \geq c_4(t)\qquad \text{$\mathbb{P}$-a.s.}
			\end{equation}
			\item[(ii)] There is a constant $c_5>0$ such that		
			\begin{equation}\label{E:output2}		
				\liminf _{r \to 0} \inf_{t\in (0,T)} \sup_{s \in (t-r,t+r)} \frac{\|X_t - X_s \| }{r^{H} ( \log r^{-1})^{-H(\frac{\theta}{d-\alpha}+1)}} \geq c_5\qquad \text{$\mathbb{P}$-a.s.}
			\end{equation}
		\end{enumerate}
		In particular, $X$ is almost surely nowhere differentiable.
	\end{theorem}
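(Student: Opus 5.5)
The plan is to deduce Theorem \ref{cm2} from Theorem \ref{main} by a deterministic comparison between the occupation measure and the oscillation of the path, in the spirit of \cite{knsv} and \cite[Theorem 3.1]{Xiao1997}. Fix $\omega$ in the full-measure event where \eqref{eq:main-fixed-s} (resp. \eqref{eq:main-sup-s}) holds, take $t\in(0,T)$ and $r>0$ small, and set
\[
\rho:=\sup_{s\in(t-r,t+r)}\|X_t-X_s\|.
\]
For every $u\in[t-r,t+r]$ we have $\|X_t-X_u\|\le\rho$; at the endpoints this follows by continuity, since Assumption \ref{a1}(ii) gives a Hölder continuous modification. Evaluating the potential at the point $x=X_t$ therefore gives
\[
U^\alpha\mu^X_{t-r,t+r}(X_t)=\int_{t-r}^{t+r}\|X_t-X_u\|^{\alpha-d}\,du\ \ge\ 2r\,\rho^{\alpha-d},
\]
because $\alpha-d<0$. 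Hence $L^{\alpha,X}(X_t,[t-r,t+r])\ge \frac{\alpha}{d\omega_d}\,2r\,\rho^{\alpha-d}$.

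For $\alpha=0$ I use instead the standard argument via occupation density. The measure $\mu_{t-r,t+r}^X$ has total mass $2r$ and is supported in the closed ball $\bar B(X_t,\rho)$. Its density is $L(x,[t-r,t+r])$, so
\[
2r\le \omega_d\rho^d\sup_x L(x,[t-r,t+r]).
\]
Both cases yield the same structure, namely $\rho^{d-\alpha}\ge c\,r\,/\sup_x L^{\alpha,X}(x,[t-r,t+r])$.

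Next I insert the upper bound from Theorem \ref{main}(i). For $r$ small enough (depending on $\omega$),
\[
\sup_x L^{\alpha,X}(x,[t-r,t+r])\le 2c_2(t)\, r^{1-H(d-\alpha)}(\log\log r^{-1})^{H(\theta+d-\alpha)}.
\]
Therefore
\[
\rho^{d-\alpha}\ge c\, r^{H(d-\alpha)}(\log\log r^{-1})^{-H(\theta+d-\alpha)}.
\]
Taking the $(d-\alpha)$-th root gives
\[
\rho\ge c'\,r^H(\log\log r^{-1})^{-H(\frac{\theta}{d-\alpha}+1)},
\]
which is \eqref{E:output} with $c_4(t)$ an explicit function of $c_2(t)$, $d$, $\alpha$ and $\omega_d$. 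Part (ii) follows identically from \eqref{eq:main-sup-s}. The key point is that the lower bound on $\rho$ holds simultaneously for all $t$, since the pointwise inequality above is deterministic and the bound in \eqref{eq:main-sup-s} is uniform in $t$ and $x$; taking $\inf_t$ then gives \eqref{E:output2}.

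Nowhere differentiability follows from (ii). Suppose $X$ were differentiable at some $t_0\in(0,T)$. Then $\sup_{|s-t_0|<r}\|X_{t_0}-X_s\|\le Cr$ for small $r$. But (ii) forces this supremum to be at least $c_5 r^H(\log r^{-1})^{-\gamma}$, and $r^H(\log r^{-1})^{-\gamma}/r\to\infty$ because $H<1$. This is a contradiction.

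The main obstacle is mild. One must ensure that the exceptional null set does not depend on $t$ in part (ii); this is handled by the uniformity built into Theorem \ref{main}(ii). One also needs the continuity of paths used at the endpoints. Everything else is a direct consequence of Theorem \ref{main}.
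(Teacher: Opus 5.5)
Your proposal is correct and is essentially the paper's proof. The paper likewise bounds $\int_{t-r}^{t+r}\|X_t-X_s\|^{\alpha-d}\,ds=\frac{d\omega_d}{\alpha}L^{\alpha,X}(X_t,[t-r,t+r])$ from below by $2r\rho^{\alpha-d}$, applies \eqref{eq:main-fixed-s} (resp.\ \eqref{eq:main-sup-s}) at the random point $x=X_t$, and rearranges; the differences are cosmetic, since the paper handles $\alpha=0$ by appeal to the known local-time result rather than your occupation-density argument, and it states nowhere differentiability without proof, whereas your derivation from part (ii) is correct.
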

	
	\begin{remark} For $\alpha=0$ (and therefore $Hd< 1$) Theorem \ref{cm2} recovers the corresponding results in \cite[Corollary 2.6]{SSV}\footnote{In \cite[Corollary 2.6]{SSV} one has $r^{Hd}$ in the denominator instead of $r^{H}$. However, by carefully examining the proof, e.g. in \cite{knsv}, one observes that the correct term is $r^{H}$, as expected.}. For $Hd\geq 1$ the parameter $\alpha$ can be chosen arbitrarily close to $d-\frac{1}{H}$.
	\end{remark}		
	
	\begin{remark}
		\label{rem:upper-regularity}
		Under Assumption \ref{a1} (ii) \cite[Corollary 2.11]{Nummi-Viitasaari} gives 
		$$
		\limsup_{|t-s|\to 0} \frac{\Vert X_t - X_s\Vert}{|t-s|^H \left(\log\frac{1}{|t-s|}\right)^\iota} \leq c_6
		$$
		with a constant $c_6>0$. Since $r\mapsto r^H \left(\log r^{-1}\right)^\iota$ is increasing for all small enough $r>0$, this implies that
		\begin{equation}\label{E:match}
			\limsup _{r \to 0} \sup_{t\in(0,T)}\sup_{s \in (t-r,t+r)} \frac{\|X_t - X_s \| }{r^H(\log r^{-1})^{\iota}} \leq c_6.
		\end{equation}
		Up to different exponents in the logarithmic terms, \eqref{E:output2} and \eqref{E:match} provide matching lower and upper regularity results.
	\end{remark}
	
	We finally make the simple but useful observation that, in contrast to the situation in \cite[Theorem 2.1]{SSV}, Assumption \ref{a1} does not include any dimensional restriction on $H\in (0,1)$. As a consequence, it is stable under forming random vectors from independent components.
	
	\begin{lemma}
		\label{lma:1-to-d}
		Suppose that $X=(X^1,\dots,X^d)$ is a vector of independent real valued processes $X^\ell$ each satisfying Assumption \ref{a1} with $1$ in place of $d$ and  with $H$, $c_0$, $\theta$, $c_1$ and $\iota$. Then $X$ satisfies Assumption \ref{a1} with $H$, $c_0^d$, $d\theta$, $dc_1$ and $\iota$.
	\end{lemma}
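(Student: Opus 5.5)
The plan is to verify the two parts of Assumption \ref{a1} separately, using independence of the components for (i) and the norm inequality $\|x\|\le\sum_\ell|x^\ell|$ for (ii).

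\emph{Part (i).} Fix $n$, a partition $0=t_0<\dots<t_n<T$, vectors $\xi_j\in(\rr\setminus\{0\})^d$ and exponents $k_{j,\ell}\in\{0,4\}$. Since $\langle \xi_j, X_{t_j}-X_{t_{j-1}}\rangle=\sum_{\ell=1}^d \xi_j^\ell (X^\ell_{t_j}-X^\ell_{t_{j-1}})$, the exponential factorizes as a product over $\ell$ of $\exp\big(i\sum_j \xi_j^\ell (X^\ell_{t_j}-X^\ell_{t_{j-1}})\big)$. By independence of the processes $X^1,\dots,X^d$, the expectation factorizes into the product over $\ell$ of the one-dimensional characteristic functions. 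To each factor we apply Assumption \ref{a1}(i) for $X^\ell$ with $d=1$, the same partition, frequencies $\xi_j^\ell\neq 0$ and exponents $k_{j,\ell}$. This gives the bound
\[
\prod_{\ell=1}^d c_0^n n^{H\theta n}\prod_{j=1}^n \frac{1}{|\xi_j^\ell|^{k_{j,\ell}}(t_j-t_{j-1})^{Hk_{j,\ell}}}
=(c_0^d)^n\, n^{H(d\theta)n}\prod_{j=1}^n\prod_{\ell=1}^d \frac{1}{|\xi_j^\ell|^{k_{j,\ell}}(t_j-t_{j-1})^{Hk_{j,\ell}}},
\]
which is exactly (i) with constants $c_0^d$ and $d\theta$. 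The one point to check is that the one-dimensional assumption is applied with exponents that are allowed to vary with $j$ for each fixed $\ell$; this holds because (i) permits arbitrary choices of $k_{j}$.

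\emph{Part (ii).} Since $\|x\|\le \sum_\ell |x^\ell|$, Minkowski's inequality in $L^p$ gives
\[
(\E\|X_t-X_s\|^p)^{1/p}\le \sum_{\ell=1}^d (\E|X^\ell_t-X^\ell_s|^p)^{1/p}\le d\,c_1 p^{\iota}|t-s|^H .
\]
Raising to the power $p$ yields (ii) with constants $dc_1$ and $\iota$. The value of $H$ is unchanged throughout.

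Neither step is a real obstacle. The only care needed is in the bookkeeping of the product of constants in (i): $\prod_\ell c_0^n n^{H\theta n}=(c_0^d)^n n^{H d\theta n}$.
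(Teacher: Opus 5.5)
Your proof is correct and follows the paper's argument. For (i) you factorize the characteristic function via independence and apply the one-dimensional bound componentwise; for (ii) you bound $\|X_t-X_s\|$ by $\sum_{\ell=1}^d|X^\ell_t-X^\ell_s|$, using Minkowski's inequality in $L^p$ where the paper uses the convexity bound $\bigl(\sum_{\ell=1}^d a_\ell\bigr)^p\le d^{p-1}\sum_{\ell=1}^d a_\ell^p$, and both give the constant $dc_1$.
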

	\begin{proof}
		Since each component $X^\ell$ satisfies 
		\begin{align*}
			\Big\lvert\mathbb{E} \Big[ \exp \Big( i \sum_{j=1}^n (\xi_j^\ell( X^\ell_{t_{j}}-X^\ell_{t_{j-1}})) \Big) \Big]\Big\rvert \leq c_0^n n^{H\theta n}\prod_{j=1}^n \frac{1}{\lvert\xi_j^\ell\rvert^{k_{j,\ell}} 
				(t_j-t_{j-1})^{H k_{j,\ell}}},
		\end{align*}
		independence gives
		\begin{align*}
			\Big\lvert  \mathbb{E} \Big[ \exp \Big( i \sum_{j=1}^n \langle \xi_j, X_{t_{j}}-X_{t_{j-1}}\rangle \Big) \Big]\Big\rvert  &= 	  \prod_{\ell=1}^d \Big\lvert\mathbb{E} \Big[ \exp \Big( i \sum_{j=1}^n \xi_j^\ell( X^\ell_{t_{j}}-X^\ell_{t_{j-1}}) \Big) \Big]\Big\rvert \\
			&\leq		
			c_0^{dn} n^{dH\theta n}\prod_{j=1}^n \prod_{\ell=1}^d\frac{1}{\lvert\xi_j^\ell \rvert^{k_{j,\ell}} 
				(t_j-t_{j-1})^{H k_{j,\ell}}}
		\end{align*}
		Moreover, we have 
		\[
		\mathbb{E}\Vert X_t -X_s\Vert^p \leq\E\left(\sum_{\ell=1}^d |X_t^\ell-X_s^\ell|\right)^p\leq d^{p-1}\sum_{\ell=1}^d  \mathbb{E}|X_t^\ell-X_s^\ell|^p\leq d^p c_1^p p^{\iota p}|t-s|^{Hp}.
		\]
	\end{proof}

	\section{Gaussian processes, Rosenblatt processes and fractional Brownian SDEs}\label{S:apps}
	
	We consider three classes of examples, namely Gaussian processes, Rosenblatt processes and solutions of stochastic differential equations driven by fractional Brownian motions. In each of these cases, Theorem \ref{cm2} gives new results on lower path regularity.
	
	\subsection*{Gaussian processes} 
	For Gaussian random fields in the local time regime, results on lower path regularity were provided in \cite[Theorem 3.1]{Xiao1997}. For the Gaussian process case ($N=1$ in \cite{Xiao1997}) we provide a corresponding result beyond the local time regime.
	
	\begin{theorem} \label{main4}
		Let $X=(X_t)_{t\in[0,T]}$ be a centered Gaussian process $X=(X^1,...,X^d)$ with values in $\mathbb{R}^d$ and with real valued components $X^\ell=(X^\ell_t)_{t\in[0,T]}$ satisfying
		\begin{equation}
			\label{eq:Gaussian-variance}
			C_- \lvert t-s\rvert^{2H} \leq \mathbb{E}(X^\ell_t - X^\ell_s)^2 \leq C_+\lvert t-s\rvert^{2H},\qquad 0\leq s<t\leq T,
		\end{equation}
		for fixed constants $H \in (0,1)$ and $C_+\geq C_->0$. Suppose that the local non-determinism property
		\begin{equation}
			\label{eq:Gaussian-LND}
			\E \left| \sum_{k=1}^m \langle \xi_k,X_{t_k}-X_{t_{k-1}}\rangle\right|^2 \geq C \prod_{\ell=1}^d \sum_{k=1}^m \xi_{k,\ell}^2 \E (X^\ell_{t_k}-X^\ell_{t_{k-1}})^2
		\end{equation}
		holds with a constant $C>0$ and for all $m\in \mathbb{N}$, $\xi_k \in \mathbb{R}^d$ and all $0=t_0<t_1<\ldots<t_m\leq T$. Then 
		\begin{enumerate}
			\item[(i)] Assumption \ref{a1} (i) and (ii) hold with $\theta=0$ respectively $\iota = \frac12$.
			\item[(ii)] For any $t \in (0,T)$  there is a constant $c_4(t)>0$ such that
			\[\liminf _{r \to 0} \sup_{s \in (t-r,t+r)} \frac{\|X_t - X_s \| }{r^H(\log\log r^{-1})^{-H}} \geq c_4(t)\qquad \text{$\mathbb{P}$-a.s.}\]
			\item[(iii)] There is a constant $c_5>0$ such that		
			\[\liminf _{r \to 0} \inf_{t\in (0,T)} \sup_{s \in (t-r,t+r)} \frac{\|X_t - X_s \| }{r^{H} ( \log r^{-1})^{-H}} \geq c_5\qquad \text{$\mathbb{P}$-a.s.}\]
		\end{enumerate}				
	\end{theorem}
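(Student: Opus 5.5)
Parts (ii) and (iii) follow from Theorem \ref{cm2} once (i) is established, applied with $\theta=0$ and any admissible $\alpha$ from \eqref{E:parameters}. The exponents $-H(\frac{\theta}{d-\alpha}+1)$ then reduce to $-H$, independently of $\alpha$. So the plan is to verify Assumption \ref{a1} and invoke Theorem \ref{cm2}.

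For (ii) of Assumption \ref{a1} with $\iota=\frac12$: each $X^\ell_t-X^\ell_s$ is centered Gaussian with variance at most $C_+|t-s|^{2H}$. Hence $\E|X^\ell_t-X^\ell_s|^p\le (C_+|t-s|^{2H})^{p/2}\E|N|^p$ with $N$ standard normal. Since $\E|N|^p\le c^p p^{p/2}$ by Stirling, this gives the one-dimensional bound, and the $d$-dimensional bound follows by the elementary inequality used in Lemma \ref{lma:1-to-d}.

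For (i) of Assumption \ref{a1}: the characteristic function equals $\exp(-\frac12 \E|\sum_j\langle\xi_j,X_{t_j}-X_{t_{j-1}}\rangle|^2)$. By \eqref{eq:Gaussian-LND} and the lower bound in \eqref{eq:Gaussian-variance}, the exponent is at least a constant times $\sum_{j}\sum_\ell$ of terms $|\xi_j^\ell|^2(t_j-t_{j-1})^{2H}$. I read the right side of \eqref{eq:Gaussian-LND} as a sum over $\ell$, i.e.\ componentwise nondeterminism, so that the exponential factorizes over $(j,\ell)$. If it is literally a product, I would first use it to reduce to a sum-type lower bound; this reading is the point I expect to need care with.

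Once factorized, the modulus is bounded by $\prod_{j,\ell}\exp(-c\,a_{j,\ell}^2)$ with $a_{j,\ell}=|\xi_j^\ell|(t_j-t_{j-1})^H$. Each factor is handled by cases on $k_{j,\ell}$. If $k_{j,\ell}=0$, the factor is bounded by $1$. If $k_{j,\ell}=4$, we use $e^{-ca^2}\le C' a^{-4}$, since $\sup_a a^4e^{-ca^2}<\infty$. The resulting constant is $C'^{dn}$, so we may take $c_0=\max(1,C')^d$ and $\theta=0$, with no $n^{H\theta n}$ growth needed; this is the usual advantage of the Gaussian case.

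The main obstacle is exactly this use of local nondeterminism. One has to make sure the lower bound on the variance of the sum splits into a product over all pairs $(j,\ell)$ with constant raised only to power $n$ (or $dn$), and not something worse. With the sum interpretation this is immediate. After it, the rest is citation of Theorem \ref{cm2}, and the statement of nowhere differentiability is inherited as well.
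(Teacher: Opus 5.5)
Your proposal is correct and follows the paper's route. The paper simply cites the proof of \cite[Proposition 2.12]{SSV} for Assumption~\ref{a1} with $\theta=0$ and $\iota=\frac12$, which you carry out directly via $\E e^{iY}=e^{-\frac12\E Y^2}$, local nondeterminism, the lower variance bound, $e^{-ca^2}\le C'a^{-4}$ and Gaussian moment bounds; it then applies Theorem~\ref{cm2}, where the logarithmic exponent collapses to $-H$. Your sum-over-$\ell$ reading of \eqref{eq:Gaussian-LND} is the right one. Under $\xi_k\mapsto\lambda\xi_k$ the left side scales like $\lambda^2$, while the literal product scales like $\lambda^{2d}$ and is strictly positive by the lower bound in \eqref{eq:Gaussian-variance}. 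So for $d\ge 2$ the product version can never hold, and for $d=1$ the two readings coincide.
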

	
	\begin{remark}
		The resulting constants $c_0$ and $c_1$ in Assumption \ref{a1} (i) respectively (ii) depend only on  $T$, $d$, $C_-$ and $C$ respectively $T$, $d$ and $C_+$.
	\end{remark}
	
	\begin{proof}
		Item (i) was already shown in the proof of \cite[Proposition 2.12]{SSV}; the restriction $Hd<1$ required there is not needed here. Items (ii) and (iii) follow from (i) and Theorem \ref{cm2}.
	\end{proof}

	\begin{remark}
		Theorem \ref{main4} gives lower path regularity for an \emph{arbitrary} pair $(H,d)$. For $Hd<1$ it recovers the corresponding result provided in \cite{Xiao1997}, for $Hd\geq 1$ the result is new. 
	\end{remark}
	
	\begin{example}
		Theorem \ref{main4} covers the case of $d$-dimensional fractional Brownian motion $B=(B^1, \ldots, B^d)$ with \emph{arbitrary} Hurst index $H\in (0,1)$; it is defined by taking independent centered Gaussian processes $B^\ell = (B^\ell_t)_{t\in [0,T]}$, $1 \leq \ell \leq d$, each having covariance function \[R(s,t) = \frac{1}{2}\left[t^{2H}+s^{2H}-|t-s|^{2H}\right],\quad  0\leq s<t\leq T,\] 
		and forming $B$ as stated. The special case for $H=\frac12$ is Brownian motion. If $Hd<1$, then $B$ has continuous local times $L_T^{B}$. In this case Theorem \ref{main} recovers the $N=1$ case of \cite[Corollary 1.1]{Xiao1997}.
	\end{example}

	\subsection*{Rosenblatt processes}
	Let $Z=(Z_t)_{t\in [0,T]}$ be a $d$-dimensional Rosenblatt process, that is, a process $Z=(Z^1,\dots, Z^d)$ whose components $Z^\ell$ are independent real valued Rosenblatt processes of Hurst index $H\in \left(\frac12,1\right)$ which can be constructed via the representation 
	$$Z_t^\ell = \int_{\rr^2}\mathbf{1}_{\{x \neq \pm y\}}(x,y) \frac{e^{it (x+y)} -1}{i(x+y)} Z_G(dx) Z_G(dy)$$
	for every $t \in [0,T]$, where $Z_G(dx)$ is a complex-valued random white noise with control measure $G$ satisfying $G(dx) = \lvert x\rvert^{-H} dx$.

	Rosenblatt processes are non-Gaussian processes having correlation structures similar to those of fractional Brownian motion. They arise naturally as scaling limits of long-memory sequences. We refer to \cite{knsv} for a more detailed introduction and discussion. 
	
	Lower path regularity results for real valued Rosenblatt processes were provided in \cite[Corollary 1.7]{knsv} and in \cite[Corollary 2.6 and Proposition 2.10]{SSV}. The following result is, to the best of our knowledge, the first extension to arbitrary dimensions $d$. 
	
	\begin{theorem} \label{main3}
		Let $Z=(Z_t)_{t\in[0,T]}$ be a $d$-dimensional Rosenblatt process $Z=(Z^1,\dots, Z^d)$ with Hurst index $H\in (\frac12, 1)$. 
		\begin{enumerate}
			\item[(i)] Assumption \ref{a1} (i) and (ii) hold with $\theta=0$ respectively $\iota = 1$.
			\item[(ii)] Statements (ii) and (iii) in Theorem \ref{main4} hold with $Z$ in place of $X$.
		\end{enumerate}		
	\end{theorem}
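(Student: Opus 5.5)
The plan is to reduce everything to the one-dimensional case through Lemma \ref{lma:1-to-d}, and then to conclude with Theorem \ref{cm2}, in the same way as in the proof of Theorem \ref{main4}.

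For item (i), fix a real-valued Rosenblatt process $Z^\ell$ with Hurst index $H\in(\frac12,1)$.

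\textbf{Characteristic function bound.} For $d=1$, Assumption \ref{a1}(i) with $\theta=0$ has essentially been verified in the literature on local times of the Rosenblatt process. The precise statement is \cite[Proposition 2.10]{SSV}, which builds on the characteristic function estimates of \cite{knsv}. Those estimates come from the explicit form of the characteristic function of a double Wiener–Itô integral, $\E e^{iF}=\prod_k (1-2i\lambda_k)^{-1/2}e^{-i\lambda_k}$, where $\lambda_k$ are the eigenvalues of the associated Hilbert–Schmidt operator. Combining this with a lower bound on the relevant quadratic form gives decay of order $|\xi_j|^{-4}(t_j-t_{j-1})^{-4H}$ in each block, and the combinatorial constant is of order $c_0^n$ with no $n^{H\theta n}$ growth.

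I need to check that the argument in \cite{SSV} does not use the condition $Hd<1$ at the level of Assumption \ref{a1}(i) for $d=1$. It does not: there the condition $H<1$ is automatic, and the dimensional restriction only enters when local times are constructed, not when the estimate is proved. I therefore plan to cite it directly. Only the exponents $k_{j}\in\{0,4\}$ are needed, and the choice $4$ is exactly what the cited estimate provides.

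\textbf{Hypercontractivity.} Assumption \ref{a1}(ii) with $\iota=1$ is standard. The increment $Z^\ell_t-Z^\ell_s$ lies in the second Wiener chaos, so by hypercontractivity
\[
\E|Z^\ell_t-Z^\ell_s|^p\le (p-1)^{p}\big(\E|Z^\ell_t-Z^\ell_s|^2\big)^{p/2}.
\]
By self-similarity and stationarity of increments, $\E|Z^\ell_t-Z^\ell_s|^2=c\,|t-s|^{2H}$. Hence
\[
\E|Z^\ell_t-Z^\ell_s|^p\le c_1^p\,p^{p}\,|t-s|^{Hp}.
\]

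\textbf{Passing to dimension $d$.} Since the components $Z^\ell$ are independent, Lemma \ref{lma:1-to-d} shows that $Z$ satisfies Assumption \ref{a1} with constants $c_0^d$, $d\theta=0$, $dc_1$ and $\iota=1$. This proves (i).

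For item (ii), apply Theorem \ref{cm2} with $\theta=0$. The logarithmic exponent becomes $-H(\frac{0}{d-\alpha}+1)=-H$ for every admissible $\alpha$ in \eqref{E:parameters}, which is exactly statements (ii) and (iii) of Theorem \ref{main4} with $Z$ in place of $X$. Such an $\alpha$ always exists: if $Hd<1$ take $\alpha=0$, and otherwise take any $\alpha\in(d-\frac1H,d)$.

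The main obstacle is the first step, namely confirming that the one-dimensional characteristic function bound from \cite{SSV}/\cite{knsv} holds with $\theta=0$ and with the exact form of Assumption \ref{a1}(i). If the cited bound turns out to carry an $n^{cn}$ factor, I would instead track that factor as $\theta>0$. In that case Theorem \ref{cm2} still gives lower regularity, but with exponent $-H(\frac{\theta}{d-\alpha}+1)$, so the statement with exponent $-H$ would no longer follow.
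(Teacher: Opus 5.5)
Your proposal is correct and follows the paper's proof: the one-dimensional estimate with $\theta=0$ comes from \cite[Proposition 2.10]{SSV}, Lemma \ref{lma:1-to-d} lifts it together with $\iota=1$ to dimension $d$, and Theorem \ref{cm2} with $\theta=0$ then gives the exponent $-H$. One harmless slip is that the second-chaos bound $\E|F|^p\leq (p-1)^{p}(\E F^2)^{p/2}$ only holds for $p\geq 2$ (it degenerates at $p=1$), so for $1\leq p<2$ you should use $\E|F|^p\leq (\E F^2)^{p/2}\leq p^{p}(\E F^2)^{p/2}$ instead.
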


	\begin{proof}	
		Item (i) follows in a straightforward manner by combining Lemma \ref{lma:1-to-d} and \cite[Proof of Proposition 2.10]{SSV}. Items (ii) and (iii) are again due to (i) and Theorem \ref{cm2}.
	\end{proof}
	
	\subsection*{Solutions to SDEs driven by fractional Brownian motion} Let $B$ be a $d$-dimensional fractional Brownian motion with Hurst parameter $H \in (\frac{1}{4}, 1)$. We consider the class of differential equations given by
	\begin{equation}\label{SDE1}
		X_t = x + \int_0^t V_0 (X_s) ds + \sum_{\ell=1}^d \int_0^t V_\ell(X_s) dB_s^\ell,\qquad t \in [0,T],
	\end{equation}
	where $x \in \rd$ is the initial condition and $V_0, V_1, \ldots, V_d$ are given vector fields in $\rd$. For the Brownian case $H=\frac12$ the second integral is understood in the Stratonovich sense, for  $H\in (\frac12,1)$ in the pathwise Young sense \cite{zahle} and for $H\in (\frac14,\frac12)$ in the rough path sense \cite{friz}. The lower bound $H>\frac14$ is imposed in order to ensure the existence of the stochastic integrals. The vector fields $V_0, V_1, \dots, V_d$ are assumed to be smooth with bounded derivatives of all orders and such that the $(d\times d)$-matrix valued function $V=[V_1,\dots,V_d]$ with columns $V_1,\dots,V_d$ is uniformly elliptic, that is, 
	\begin{equation}\label{E:assvector}
		V_0, V_1, \ldots, V_d \in \mathcal{C}_b^\infty (\rd,\rd)\qquad\text{and}\qquad \xi^{\operatorname{T}} V(x) V(x)^{\operatorname{T}} \xi \geq \lambda \| \xi\| ^2 \qquad x,\xi \in \rd,
	\end{equation}
	with a constant $\lambda>0$. Under these standard assumptions the existence and uniqueness of solutions $X=(X_t)_{t\in[0,T]}$ in the respective sense can be guaranteed \cite{baudoin,lou}.
	
	The existence of continuous local times $L_T^X$ for $X$ requires $Hd<1$, see \cite[Theorem 1.1]{lou} and \cite[Theorem 2.7]{SSV}. Results on the lower path regularity of $X$ under this restriction were proved in \cite[Corollary 2.6 and Theorem 2.7]{SSV}; they require  $H\in \left(\frac14,\frac12\right)$ for 
	$d=2$, $H\in \left(\frac14,\frac13\right)$ for $d=3$, while for $d\geq 4$ no results were available. 
	
	Using Riesz potentials of occupation measures, we can now provide a result for the entire range $H\in (\frac{1}{4}, 1)$. This highlights the benefits of our approach.
	
	\begin{theorem} \label{main2} Assume \eqref{E:assvector}, let $B$ be a $d$-dimensional fractional Brownian motion with Hurst parameter $H \in (\frac{1}{4}, 1)$ and let $X=(X_t)_{t\in[0,T]}$ be the solution $X=(X^1,\dots, X^d)$ of \eqref{SDE1}, understood as stated. 
		\begin{enumerate}
			\item[(i)]  For every $\delta>0$, Assumption \ref{a1} (i) holds with $\theta=\frac{8d+\delta}{H}$, and Assumption \ref{a1} (ii) holds
			with $\iota=\frac12$.
			\item[(ii)] For any $\gamma>8\max(1,Hd)+H$ and any $t \in (0,T)$  there is a constant $c_4(t)>0$ such that
			\[\liminf _{r \to 0} \sup_{s \in (t-r,t+r)} \frac{\|X_t - X_s \| }{r^H(\log\log r^{-1})^{-\gamma}} \geq c_4(t)\qquad \text{$\mathbb{P}$-a.s.}\]
			\item[(iii)] For any $\gamma>8\max(1,Hd)+H$ there is a constant $c_5>0$ such that		
			\[\liminf _{r \to 0} \inf_{t\in (0,T)} \sup_{s \in (t-r,t+r)} \frac{\|X_t - X_s \| }{r^{H} ( \log r^{-1})^{-\gamma}} \geq c_5\qquad \text{$\mathbb{P}$-a.s.}\]
		\end{enumerate}
	\end{theorem}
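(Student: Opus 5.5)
The plan is to treat Theorem \ref{main2} like Theorems \ref{main4} and \ref{main3}. We first verify Assumption \ref{a1} for the solution $X$ of \eqref{SDE1}, and then read off (ii) and (iii) from Theorem \ref{cm2} with a suitable choice of $\alpha$.

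For item (i) we follow the proof of \cite[Theorem 2.7]{SSV}, which establishes exactly the characteristic function bound of Assumption \ref{a1} (i) with $\theta=\frac{8d+\delta}{H}$. That bound comes from Malliavin calculus: one uses repeated integration by parts, which produces the factors $|\xi_j^\ell|^{-4}(t_j-t_{j-1})^{-4H}$. The only inputs are nondegeneracy of the Malliavin matrix of the increments, which follows from uniform ellipticity \eqref{E:assvector}, and moment bounds for the Malliavin derivatives (cf.\ \cite{baudoin,lou}). The restriction $H<\frac1d$ in \cite{SSV} was used only afterwards, to obtain existence of local times. It was not used in deriving the estimate itself, so the estimate carries over verbatim to all $H\in(\frac14,1)$.

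Still for item (i), Assumption \ref{a1} (ii) with $\iota=\frac12$ follows from standard moment estimates for rough or Young differential equations with $\mathcal{C}_b^\infty$ coefficients. Write $X_t-X_s$ via the flow. Its $L^p$-norm is bounded by $C\,\|B\|_{\text{Höl},[s,t]}$ times a polynomial in the Hölder norm of the (rough) lift. Gaussian tails then give the $p^{p/2}$ growth. If only weaker integrability of the rough path norm is available (Cass–Litterer–Lyons), we would instead accept a larger $\iota$. This does not matter for (ii) and (iii), since $\iota$ does not enter Theorem \ref{cm2}.

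For items (ii) and (iii) we apply Theorem \ref{cm2} with $\theta=\frac{8d+\delta}{H}$. The resulting logarithmic exponent is
\[
H\Big(\frac{\theta}{d-\alpha}+1\Big)=\frac{8d+\delta}{d-\alpha}+H .
\]
We now choose $\alpha$ according to \eqref{E:parameters}.
\begin{itemize}
\item If $Hd<1$, take $\alpha=0$. The exponent is then $8+\frac{\delta}{d}+H$.
\item If $Hd\ge1$, take $\alpha$ slightly larger than $d-\frac1H$, so that $d-\alpha$ is slightly smaller than $\frac1H$. The exponent is then slightly larger than $(8d+\delta)H+H=8Hd+H+\delta H$.
\end{itemize}
In both cases the exponent can be made arbitrarily close to $8\max(1,Hd)+H$ by choosing $\delta$ and $\alpha$ suitably. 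So for any $\gamma>8\max(1,Hd)+H$ we can arrange $H(\frac{\theta}{d-\alpha}+1)\le\gamma$.

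Since $(\log\log r^{-1})^{-\gamma}\le(\log\log r^{-1})^{-H(\theta/(d-\alpha)+1)}$ for small $r$, the liminf with exponent $\gamma$ dominates the one in \eqref{E:output}. The same comparison with $\log r^{-1}$ handles \eqref{E:output2}. This gives (ii) and (iii).

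The main obstacle is item (i), namely checking that the Malliavin-calculus estimate of \cite{SSV} truly yields the exact form of Assumption \ref{a1} (i) for all $H\in(\frac14,1)$ and all $d$, with the stated $\theta$. In particular, the $n^{H\theta n}$ growth must come out of the combinatorics of iterated integration by parts over $n$ increments, with the $4d$ derivatives per block each costing a factor of order $n^{2}$ through Malliavin-derivative moments. I would first re-examine that computation and isolate where $Hd<1$ enters, expecting that it enters only in the final integrability step for local times.
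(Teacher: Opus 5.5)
Your proposal follows the paper's route: the paper's proof takes Assumption \ref{a1} from the proof of \cite[Theorem 2.7]{SSV}, noting that $Hd<1$ is not used there, and then invokes Theorem \ref{cm2}. Your explicit parameter choice in the exponent $\frac{8d+\delta}{d-\alpha}+H$ (namely $\alpha=0$ if $Hd<1$, $\alpha\downarrow d-\frac{1}{H}$ if $Hd\geq 1$, and $\delta\downarrow 0$) correctly produces the threshold $8\max(1,Hd)+H$, which the paper leaves implicit. The one weak spot is your own sketch for $\iota=\frac{1}{2}$: a Gaussian-tailed factor times a polynomial of degree $k\geq 1$ in the norm of the lift only gives moment growth of order $p^{(k+1)p/2}$, not $p^{p/2}$. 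For item (i) as stated you should therefore take $\iota=\frac{1}{2}$ from \cite{SSV}, as the paper does; as you observe, (ii) and (iii) hold with any finite $\iota$.
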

	
	\begin{remark}
		For $dH<1$ we recover \cite[Theorem 2.7]{SSV}, for $dH\geq 1$ the result is new.
	\end{remark}
	
	\begin{proof}
		The proof of \cite[Theorem 2.7]{SSV} shows Assumption \ref{a1} as stated in item (i), the relevant arguments there do not use the condition $Hd<1$. Items (ii) and (iii) are again consequences of (i) and Theorem \ref{cm2}. 
	\end{proof}

	\section{Limits of rescaled potentials}\label{S:diff}
	
	We comment on a deterministic averaging argument supporting the point of view that potentials of occupation measures are less precise substitutes for local times. Although already used \cite{Hinz2005} and quoted in the literature \cite{CalefHardin2009, FanLaugesen2025}, we have not seen a full statement and proof published anywhere. We provide a new and refined variant, suitable for applications to transient stochastic processes.
	
	We write $\mu\ll \mathcal{L}^d$ to say that a measure $\mu$ on $\mathbb{R}^d$ is absolutely continuous with respect to the $d$-dimensional Lebesgue measure $\mathcal{L}^d$. The Radon-Nikodym density $\frac{d\mu}{d\mathcal{L}^d}$ of such a measure is the limit of rescaled Riesz-potentials.

	\begin{lemma}\label{L:abscont}
		If $\mu$ is a finite Borel measure on $\mathbb{R}^{d}$ such that $\mu\ll \mathcal{L}^d$, then 
		\begin{equation}\label{E:differentiate}
			\lim_{\alpha \to 0} \alpha\;U^{\alpha}\mu = d\:\omega_d\;\frac{d\mu}{d\mathcal{L}^d}\qquad \text{$\mu$-a.e.}
		\end{equation}
		If $L$ is a version of $\frac{d\mu}{d\mathcal{L}^d}$ which is bounded on $\mathbb{R}^d$ and 
		continuous at a point $x\in\mathbb{R}^d$, then $\lim_{\alpha \to 0} \alpha\;U^{\alpha}\mu(x)=d\:\omega_d\;L(x)$. 
	\end{lemma}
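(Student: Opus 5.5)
The plan is to write the Riesz potential via the layer-cake (Cavalieri) formula in terms of ball masses, and then apply the Lebesgue differentiation theorem. For a finite Borel measure $\mu$ and $x\in\mathbb{R}^d$ we have
\[
U^\alpha\mu(x)=\int_0^\infty \mu(\{y:\|x-y\|^{\alpha-d}>\lambda\})\,d\lambda=(d-\alpha)\int_0^\infty r^{\alpha-d-1}\mu(B(x,r))\,dr,
\]
obtained by substituting $\lambda=r^{\alpha-d}$. Set $f_x(r):=\mu(B(x,r))/(\omega_d r^d)$. Then
\[
\alpha U^\alpha\mu(x)=\alpha(d-\alpha)\omega_d\int_0^\infty r^{\alpha-1}f_x(r)\,dr .
\]
As $\alpha\to0$, the kernel $\alpha r^{\alpha-1}$ on $(0,R)$ has total mass $R^\alpha\to1$. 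On $[R,\infty)$ we use $f_x(r)\le \mu(\mathbb{R}^d)/(\omega_d r^d)$, so that piece contributes at most $\alpha(d-\alpha)\mu(\mathbb{R}^d)\int_R^\infty r^{\alpha-d-1}dr=\alpha\,\mu(\mathbb{R}^d)R^{\alpha-d}\to0$. The kernel therefore concentrates at $r=0$, and $\alpha U^\alpha\mu(x)\to d\,\omega_d\lim_{r\to0}f_x(r)$ whenever this limit exists and $f_x$ is bounded near $0$.

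For the first claim, take $g=\frac{d\mu}{d\mathcal{L}^d}\in L^1$. The Lebesgue differentiation theorem gives $f_x(r)\to g(x)$ for $\mathcal{L}^d$-a.e. $x$, and hence also $\mu$-a.e. because $\mu\ll\mathcal{L}^d$. At such a point, fix $\varepsilon>0$ and choose $R$ with $|f_x(r)-g(x)|<\varepsilon$ for $r<R$. On $(0,R)$ the integral is then $(g(x)\pm\varepsilon)R^\alpha(d-\alpha)\omega_d$, while the tail tends to $0$ as shown above. Letting $\alpha\to0$ and then $\varepsilon\to0$ yields \eqref{E:differentiate}.

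For the second claim, boundedness of $L$ gives $f_x(r)\le\sup L$ for every $r$. Continuity of $L$ at $x$ gives $f_x(r)\to L(x)$, since $f_x(r)$ is an average of $L$ over $B(x,r)$. The same $\varepsilon$-splitting argument then applies at this specific point $x$.

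The main obstacle is the rigorous justification of the layer-cake identity and of the tail estimate, together with controlling possibly infinite values of $U^\alpha\mu$. Near $r=0$, finiteness needs $f_x$ bounded on $(0,R)$; this holds wherever the limit exists, since $r^{\alpha-1}$ is integrable at $0$ for $\alpha>0$. The identity itself is just Tonelli, so all quantities are nonnegative and no cancellation issues arise.
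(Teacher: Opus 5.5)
Your proposal is correct and follows essentially the same route as the paper: you rewrite $U^\alpha\mu(x)$ as $(d-\alpha)\int_0^\infty \mu(B(x,r))\,r^{\alpha-d-1}\,dr$, use $\alpha r^{\alpha-1}$ on a bounded $r$-interval as an approximate identity at $r=0$ together with Lebesgue differentiation (respectively continuity of $L$ at $x$), and dispose of the large-$r$ part using only $\mu(\mathbb{R}^d)<\infty$. The one difference is that you get the radial formula from the layer-cake/Tonelli identity, which holds at every $x$ with values in $[0,\infty]$, whereas the paper integrates by parts and must check $\lim_{r\to 0}\mu(B(x,r))\,r^{\alpha-d}=0$ off a $\mu$-null set; your version avoids that check and also gives convergence $\mathcal{L}^d$-a.e.
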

	
	\begin{proof}
		Writing $m_x(r):=\mu(B(x,r))$ and integrating by parts,
		\begin{equation}\label{E:firstblock0}
			\int_{\mathbb{R}^d}|x-y|^{\alpha-d}\mu(dy)=\int_0^\infty r^{\alpha-d}m_x(dr)=(d-\alpha)\int_0^\infty m_x(r)r^{\alpha-d-1}dr
		\end{equation}
		for any $0<\alpha<d$ at all $x\in\mathbb{R}^d$ outside a $\mu$-null set. We have used that $\lim_{R\to\infty}m_x(R)R^{\alpha-d}=0$, which follows since $\mu$ is finite, and $\lim_{r\to 0}m_x(r)r^{\alpha-d}=0$ $\mu$-a.e. on $\mathbb{R}^d$ with the null set not depending on $\alpha$, which follows using Lebesgue's differentiation theorem, \cite[Corollary 2.14]{Mattila1995}. Using this theorem together with the properties of the averaging kernels $r\mapsto \mathbf{1}_{(0,1)}(r)\alpha r^{\alpha-1}$, $\alpha>0$, it is easily seen that 
		\[\lim_{\alpha\to 0}\alpha (d-\alpha)\int_0^1 \frac{m_x(r)}{\omega_d r^d}r^{\alpha-1}dr=d\frac{d\mu}{d\mathcal{L}^d}(x)\]
		at $\mu$-a.e. $x\in\mathbb{R}^d$.
		Since the finiteness of $\mu$ also implies that
		\begin{equation}\label{E:large0}
			\lim_{\alpha\to 0}\alpha(d-\alpha)\int_1^\infty m_x(r)r^{\alpha-d-1}dr=0,
		\end{equation}
		the first statement follows. The second statement follows similarly.
	\end{proof}
	
	\begin{remark}
		If $\mu\ll \mathcal{L}^d$, then the Radon-Nikodym density $\frac{d\mu}{d\mathcal{L}^d}\in L^1(\mathbb{R}^d)$ equals the convolution of itself with a Dirac probability measure $\delta_0$ at zero. By Lemma \ref{L:abscont} the probability measures
		$\mathbf{1}_{(0,1)}(|y|)\frac{\alpha}{d\omega_d}|y|^{\alpha-d}dy$, $0<\alpha<d$, converge weakly to $\delta_0$ as $\alpha\to 0$.
	\end{remark}
	
	If a stochastic process has continuous local times, then \eqref{E:differentiate} recovers them. 
	
	\begin{example}\mbox{}\label{Ex:classical0} 
		\begin{enumerate}
			\item[(i)] For $d=1$ and any $1<\beta\leq 2$ the symmetric $\beta$-stable process $X$ on $\mathbb{R}$ has spatially continuous local times $L_T^X$ on $[0,T]$, as shown in \cite{Boylan1964}. As a consequence, there is an event of probability one on which  
			$\lim_{\alpha\to 0}L^{\alpha,X}(x,[0,T])=L_T^X(x)$, $x\in \mathbb{R}$, as can be seen using Lemma \ref{L:abscont} and a cut-off argument. In the special case $\beta=2$ the process $X$ is Brownian motion and $L_T^X$ is the Brownian local time \cite{Levy1948, Trotter1958}.
			\item[(ii)] If $B$ is a fractional Brownian motion on $\mathbb{R}^d$ with Hurst index $H$ such that $Hd<1$, then $\lim_{\alpha\to 0}L^{\alpha,B}(x,[0,T])=L_T^{B}(x)$, $x\in \mathbb{R}$, on an event of probability one \cite{berman,Pitt1978}.
		\end{enumerate}
	\end{example}

	Even if $\mu$ is not absolutely continuous with respect to $\mathcal{L}^d$, one can, in some cases, observe an analog of \eqref{E:differentiate}. Let $\mu$ be a finite Borel measure on $\mathbb{R}^{d}$, $0<s\leq d$ and $x\in \mathbb{R}^d$. If the limit
	\begin{equation}\label{E:ad}
		D^{s}_{a}\mu(x):=\lim_{u \to 0}\frac{1}{\left|\log u \right|}\int_{u}^{1}
		\frac{\mu(B(x,r))}{r^{s}}\frac{1}{r}\:dr 
	\end{equation}
	exists, it is called \emph{average $s$-density} of $\mu$ at $x$. The right-hand side in \eqref{E:ad} can alternatively be written
	$\lim_{T \to \infty}\frac{1}{T}\int_{0}^{T}\mu\left(B\left(x,e^{-t}\right)\right)e^{ts}dt$. 
	
	\begin{remark}\mbox{}
		\begin{enumerate}
			\item[(i)] The definition \eqref{E:ad} is an averaged differentiation of measures. To the $\lim_{r\to 0}$ of the ratio $\frac{\mu(B(\cdot,r))}{r^{s}}$, if it exists, one refers as the \emph{$s$-density} of $\mu$ at $x$. If it exists, then also $D^{s}_{a}\mu(x)$ exists and the two agree, as can easily be seen from the properties of the averaging kernels $r\mapsto \mathbf{1}_{(u,1)}(r)|\log u|^{-1}r^{-1}$, $u>0$. In particular, if 
			$s=d$ and $\mu\ll \mathcal{L}^d$, then by Lebesgue's differentiation theorem the $d$-density of $\mu$ exists $\mathcal{L}^d$-a.e. on $\mathbb{R}^d$ and equals $\omega_d\:\frac{d\mu}{d\mathcal{L}^d}$. Accordingly, $D^{d}_{a}\mu=\omega_d\:\frac{d\mu}{d\mathcal{L}^d}$ $\mu$-a.e.
			If $L$ is a version of $\frac{d\mu}{d\mathcal{L}^d}$ which is bounded on $\mathbb{R}^d$ and continuous at $x\in\mathbb{R}^d$, then $D^{d}_{a}\mu(x)=\omega_d\:L(x)$.
			\item[(ii)] For points $x\in \mathbb{R}^d\setminus (\supp\mu)$ the $s$-density of $\mu$ at $x$ exists and equals zero. As a consequence, $D^{s}_{a}\mu(x)=0$ at such $x$.
		\end{enumerate}
	\end{remark}
	
	\begin{remark}\label{R:literature}\mbox{}
		\begin{enumerate}
			\item[(i)]  Average densities should be seen in connection with the prominent results in \cite{Besicovich1928, Marstrand1964, Preiss1987}, see \cite{Falconer1995, Mattila1995, Moerters1997}. They were introduced in \cite{BedfordFisher1992} for situations where $s$-densities do not exist. Their $\mu$-a.e. existence and constancy was proved in \cite{Falconer1992, Graf1995, PatschkeZaehle1993} and \cite{Zaehle2001} for the cases of self-similar respectively self-conformal measures $\mu$; results for random fractals are provided in \cite{PatschkeZaehle1994}. See also \cite[Section 6.2]{Falconer1997}.
			\item[(ii)] Let $d\geq 1$, $0<\beta\leq 2$, $\beta<d$, $\beta\neq 1$ and let $X=X(\omega,\cdot)$ be the isotropic $\beta$-stable process on $\mathbb{R}^d$. 
			In \cite[Theorem 1]{FalconerXiao1995} it was proved that there is an event $\Omega_0$ of probability one and a constant $c_\beta>0$, such that for any $\omega\in \Omega_0$ the average $\beta$-density $D_a^\beta\mu_\omega$ of the occupation measure $\mu_{0,T}^{X(\omega,\cdot)}$ exists and equals $c_\beta$ at all $x\in \{X(\omega,t):\ t\in [0,T]\}\setminus N_\omega$, where $N_\omega$ is a $\mu_{0,T}^{X(\omega,\cdot)}$-null set. For $d\geq 3$ and $\beta=2$ the process $X$ is Brownian motion $B=B(\omega,\cdot)$ on $\mathbb{R}^d$. Related results for Brownian motion in $\mathbb{R}^2$ were provided in \cite{Moerters1998}.
		\end{enumerate} 
	\end{remark}
	
	In \cite{Graf1995, PatschkeZaehle1994} and in \cite[Section 4.2]{Zaehle2001}, it was observed that average $s$-densities are limits of truncated Riesz potentials of order $d-s$. The arguments in \cite{Zaehle2001}, although stated under more restrictive assumptions, give the following.
	
	\begin{lemma}
		Let $\mu$ be a finite Borel measure on $\mathbb{R}^{d}$, $0<s\leq d$, and let $x\in\mathbb{R}^d$ be such that 
		$\limsup_{r\to 0}\frac{\mu(B(x,r))}{r^s|\log r|}=0$.
		Then $D_a^s\mu(x)$ exists if and only if 
		$\lim_{r\to 0}\frac{1}{s|\log r|}\int_{\mathbb{R}^d\setminus B(x,r)}|x-y|^{-s}\mu(dy)$
		exists, and in this case the two values agree.
	\end{lemma}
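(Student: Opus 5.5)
The plan is to rewrite the truncated potential as an integral of $m_x(r):=\mu(B(x,r))$ against $r^{-s-1}dr$, and then compare it with the averaging integral in \eqref{E:ad}. For $0<r<1$ we integrate by parts, as in the proof of Lemma \ref{L:abscont}:
\[
\int_{\mathbb{R}^d\setminus B(x,r)}|x-y|^{-s}\mu(dy)=\int_{[r,\infty)}\rho^{-s}\,m_x(d\rho)=-r^{-s}m_x(r^-)+s\int_r^\infty m_x(\rho)\rho^{-s-1}\,d\rho .
\]
The boundary term at infinity vanishes because $\mu$ is finite. Since $m_x(r^-)\le m_x(r)$, the hypothesis $\limsup_{r\to0}\frac{\mu(B(x,r))}{r^s|\log r|}=0$ gives $r^{-s}m_x(r^-)=o(|\log r|)$. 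After division by $s|\log r|$, this boundary term therefore tends to zero.

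Next we split the remaining integral at $\rho=1$. The tail satisfies $s\int_1^\infty m_x(\rho)\rho^{-s-1}d\rho\le \mu(\mathbb{R}^d)$, which is finite and independent of $r$. Divided by $s|\log r|$, it also tends to zero. What is left is exactly
\[
\frac{1}{s|\log r|}\, s\int_r^1 \frac{m_x(\rho)}{\rho^{s}}\frac{d\rho}{\rho}=\frac{1}{|\log r|}\int_r^1 \frac{\mu(B(x,\rho))}{\rho^s}\frac{d\rho}{\rho},
\]
and this is the expression whose limit as $r\to0$ defines $D^s_a\mu(x)$.

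Consequently, the truncated potential divided by $s|\log r|$ differs from the average-density quotient by a quantity that tends to zero as $r\to0$. So one limit exists if and only if the other does, and the two limits coincide.

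The only delicate point is the justification of the integration by parts. We handle it with the Lebesgue–Stieltjes measure $m_x(d\rho)$ on $[r,\infty)$, taking care with the atom at $\rho=r$; this is why the boundary term carries $m_x(r^-)$. Alternatively, we can use Fubini directly: write $|x-y|^{-s}=r^{-s}+s\int_r^{|x-y|}\rho^{-s-1}d\rho$ for $|x-y|\ge r$ and integrate in $y$. This produces the same identity, with the boundary term $r^{-s}\mu(\mathbb{R}^d\setminus B(x,r))$ in place of $-r^{-s}m_x(r^-)$. That term is $O(r^{-s})$, however, so this route does not give an $o(|\log r|)$ bound. We therefore prefer the first form, where the boundary term is controlled exactly by the hypothesis.
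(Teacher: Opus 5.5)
Your argument is correct. The paper does not prove this lemma itself; it defers to Zähle's arguments. Your route is the natural one, and it is the same mechanism the paper uses in the proof of Lemma \ref{L:limitsofpots}. You use the identity $\int_{\mathbb{R}^d\setminus B(x,r)}|x-y|^{-s}\mu(dy)=s\int_r^\infty m_x(\rho)\rho^{-s-1}\,d\rho-r^{-s}m_x(r)$ (here $m_x(r^-)=m_x(r)$ because the balls are open), split at $\rho=1$ with the tail bounded by $\mu(\mathbb{R}^d)$, and invoke the hypothesis only to make the boundary term $o(|\log r|)$. The difference between the two quotients then tends to zero, which gives the equivalence of the limits and their equality.

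Your final paragraph is mistaken, however, and should be dropped.
\begin{itemize}
\item There is a sign error: the correct representation is $|x-y|^{-s}=r^{-s}-s\int_r^{|x-y|}\rho^{-s-1}\,d\rho$ for $|x-y|\ge r$.
\item Integrating this over $\{|x-y|\ge r\}$ with Fubini gives $r^{-s}\mu(\mathbb{R}^d\setminus B(x,r))-s\int_r^\infty\rho^{-s-1}\bigl(\mu(\mathbb{R}^d)-m_x(\rho)\bigr)\,d\rho$.
\item This simplifies to exactly the same identity, with boundary term $-r^{-s}m_x(r)$. No $O(r^{-s})$ term survives, because the $r^{-s}\mu(\mathbb{R}^d)$ contributions cancel.
\end{itemize}
So the Fubini route is in fact the cleanest rigorous justification of your integration by parts, and it needs no bookkeeping of atoms. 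An even more direct version writes $|x-y|^{-s}=s\int_{|x-y|}^\infty\rho^{-s-1}\,d\rho$ and integrates in $y$ first.
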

	
	Our next observation is an  approximation of $D_a^s\mu(x)$ in which the order of the Riesz potential varies and which generalizes \eqref{E:differentiate}. Recall that a positive Borel function $\ell$ on $(0,\varepsilon)$, where $\varepsilon>0$, is said to be slowly varying at $0$ if $\lim_{h\to 0}\frac{\ell(\lambda h)}{\ell(h)}=1$ for all $\lambda>0$.
	\begin{lemma}\label{L:limitsofpots}
		Let $\mu$ be a finite Borel measure on $\mathbb{R}^{d}$, $0<s\leq d$, and let $x\in\mathbb{R}^d$ be such that 
		\begin{equation}\label{E:upperboundmu}
			\limsup_{r\to 0}\frac{\mu(B(x,r))}{r^{s}\ell(r)}<\infty,
		\end{equation}
		where $\ell$ is a positive Borel function slowly varying at zero. If $D_{a}^{s}\mu(x)$ exists, then
		\begin{equation}\label{E:limitofpot}
			\lim_{\varepsilon \to 0} \varepsilon\;U^{d-s+\varepsilon}\mu(x) = s\;D_{a}^{s}\mu(x).
		\end{equation}
	\end{lemma}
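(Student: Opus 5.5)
The plan is to follow the proof of Lemma \ref{L:abscont} and integrate by parts in the radial variable. Write $m_x(r):=\mu(B(x,r))$ and $\alpha=d-s+\varepsilon$, so that $\alpha-d=\varepsilon-s$. Condition \eqref{E:upperboundmu} gives $m_x(r)\le C r^s\ell(r)$ for small $r$. By Potter-type bounds for slowly varying functions, $r^{\varepsilon}\ell(r)\to 0$ as $r\to 0$ for every $\varepsilon>0$. Hence $m_x(r)r^{\varepsilon-s}\to 0$ as $r\to 0$, and the boundary term at infinity vanishes because $\mu$ is finite. We therefore get
\[
U^{d-s+\varepsilon}\mu(x)=(s-\varepsilon)\int_0^\infty m_x(r)\,r^{\varepsilon-s-1}\,dr .
\]
(We may assume $\varepsilon<s$ so that $\alpha<d$.) Exactly as in \eqref{E:large0}, the piece $\varepsilon(s-\varepsilon)\int_1^\infty m_x(r)r^{\varepsilon-s-1}dr$ is at most $\varepsilon\,\mu(\mathbb{R}^d)\,\frac{s-\varepsilon}{s-\varepsilon}\to 0$. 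So it suffices to show
\[
\lim_{\varepsilon\to 0}\varepsilon\int_0^1 f(r)\,r^{\varepsilon-1}\,dr=D_a^s\mu(x),\qquad f(r):=\frac{m_x(r)}{r^s}.
\]

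Next substitute $r=e^{-t}$. With $g(t):=f(e^{-t})$, the integral becomes $\varepsilon\int_0^\infty g(t)e^{-\varepsilon t}dt$, which is an Abel mean of $g$. The hypothesis that $D_a^s\mu(x)$ exists says exactly that the Cesàro means converge: $\frac1T\int_0^T g\,dt\to D:=D_a^s\mu(x)$. Put $G(T):=\int_0^T g\,dt$ and integrate by parts once more:
\[
\varepsilon\int_0^\infty g(t)e^{-\varepsilon t}dt=\varepsilon^2\int_0^\infty G(t)e^{-\varepsilon t}dt=\int_0^\infty \frac{G(u/\varepsilon)}{u/\varepsilon}\,u e^{-u}\,du .
\]
The boundary term $\varepsilon G(T)e^{-\varepsilon T}$ vanishes as $T\to\infty$ because $G(T)=O(T)$. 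This is the classical Abelian theorem: Cesàro convergence implies Abel convergence.

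To pass to the limit in the last integral, note that the integrand converges pointwise to $D\,u e^{-u}$, and $\int_0^\infty ue^{-u}du=1$. For dominated convergence we need $G(T)/T$ to be bounded on $(0,\infty)$. For large $T$ this follows from the convergence of the Cesàro means. For small $T$ it follows from $g(t)\le C\ell(e^{-t})$ for large $t$, together with the local boundedness of $g$, since $m_x(r)/r^s\le \mu(\mathbb{R}^d)r^{-s}$ is bounded on $[e^{-t_0},1]$. One can also avoid this issue by splitting off a fixed compact range of $t$, whose contribution is $O(\varepsilon)$. Putting everything together gives
\[
\lim_{\varepsilon\to0}\varepsilon U^{d-s+\varepsilon}\mu(x)=\lim_{\varepsilon\to0}(s-\varepsilon)D=sD,
\]
which is \eqref{E:limitofpot}.

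The main obstacle is the first step, namely justifying the integration by parts at $r=0$ and keeping the integral near $0$ finite. This is precisely where \eqref{E:upperboundmu} and the slow variation of $\ell$ enter. I would use the standard fact that $\ell(r)=o(r^{-\delta})$ for every $\delta>0$, which comes from the uniform convergence theorem or Karamata's representation for slowly varying functions. It guarantees $\int_0^1 r^{\varepsilon-1}\ell(r)\,dr<\infty$ for each $\varepsilon>0$, and that $m_x(r)r^{\varepsilon-s}\to0$. The Abelian step after that is routine.
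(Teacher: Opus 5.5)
Your proof is correct and is essentially the paper's argument written in logarithmic coordinates. The paper also integrates by parts radially and discards the $\int_1^\infty$ part. It then writes $\varepsilon\int_0^1 m_x(r)r^{\varepsilon-s-1}\,dr=\int_0^1 k_\varepsilon(u)f(u)\,du$, where $f(u)=|\log u|^{-1}\int_u^1 m_x(r)r^{-s-1}\,dr$ is the Ces\`aro mean and $k_\varepsilon(u)=\mathbf{1}_{(0,1)}(u)\varepsilon^2u^{\varepsilon-1}|\log u|$. Under $u=e^{-t}$ this is exactly your identity $\varepsilon\int_0^\infty g(t)e^{-\varepsilon t}\,dt=\varepsilon^2\int_0^\infty G(t)e^{-\varepsilon t}\,dt$, since $f(e^{-T})=G(T)/T$ and $k_\varepsilon(u)\,du$ becomes $\varepsilon^2te^{-\varepsilon t}\,dt$. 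Likewise, its approximate-identity step \eqref{E:basicapprox}, which uses boundedness of $f$ and continuity at $0$, is your dominated convergence against $ue^{-u}$.

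Your bookkeeping of the constant is the right one: the Abel limit is $D_a^s\mu(x)$ and the factor $s$ comes from $s-\varepsilon\to s$. In the paper, $f(0):=sD_a^s\mu(x)$ should read $f(0):=D_a^s\mu(x)$ for $f$ to be continuous at $0$.
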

	
	Consider the averaging kernels $u\mapsto k_{\varepsilon}(u):=\mathbf{1}_{(0,1)}(u)\varepsilon^2 u^{\varepsilon-1}|\log u|$, $\varepsilon>0$. 
	It is easily seen that for any $u_0>0$ we have 
	\begin{equation}\label{E:basicprops}
		\int_0^\infty k_\varepsilon(u)\:du=1,\qquad \lim_{\varepsilon \to 0}\int_{u_0}^{\infty}k_{\varepsilon}(u)\:du = 0\qquad\text{and}\qquad \lim_{\varepsilon \to 0}\int_{0}^{u_0}k_{\varepsilon}(u)\:du = 1.
	\end{equation}
	If $f: [0,+\infty) \longrightarrow [0,+\infty)\ $ is a bounded function, continuous at some $z \geq 0$, then, by \eqref{E:basicprops},
	\begin{equation}\label{E:basicapprox}
		\lim_{\varepsilon \to 0}\int_{0}^{\infty}f(u)k_{\varepsilon}(z+u)\:du = f(z).
	\end{equation}
	For any $\varepsilon>0$ the kernel $k_\varepsilon$ satisfies
	\begin{equation}\label{E:specific}
		\int_0^r\frac{k_\varepsilon(u)}{|\log u|}\:du=\varepsilon\:r^\varepsilon,\qquad 0\leq r<1.
	\end{equation}
	Combining these facts, one can give a quick proof of Lemma \ref{L:limitsofpots}.
	
	\begin{proof}[Proof of Lemma \ref{L:limitsofpots}.]
		As in \eqref{E:firstblock0}, we find that
		\begin{equation}\label{E:firstblock}
			\int_{\mathbb{R}^d}|x-y|^{\varepsilon-s}\mu(dy)=(s-\varepsilon)\int_0^\infty m_x(r)r^{\varepsilon-s-1}dr
		\end{equation}
		for any $0<\varepsilon<s$, note that $\lim_{R\to\infty}m_x(R)R^{\varepsilon-s}=0$ since $\mu$ is finite and $\lim_{r\to 0}m_x(r)r^{\varepsilon-s}=0$ by \eqref{E:upperboundmu}. As in \eqref{E:large0}, 
		\begin{equation}\label{E:large}
			\lim_{\varepsilon\to 0}\varepsilon(s-\varepsilon)\int_1^\infty m_x(r)r^{\varepsilon-s-1}dr=0.
		\end{equation}
		Since $D_a^s\mu(x)$ exists, setting 
		$f(0):=sD_a^s\mu(x)$ and $f(u):=\frac{1}{|\log u|}\int_u^1m_x(r)r^{-s-1}dr$, $u>0$,
		defines a bounded function $f:[0,\infty)\to [0,\infty)$, continuous at zero. Fubini's theorem and \eqref{E:specific} give 
		\begin{multline}\label{E:secondblock}
			\int_0^1 k_\varepsilon(u)f(u)\:du=\int_0^1k_\varepsilon(u)\frac{1}{|\log u|}\int_u^1 m_x(r) r^{-s-1}dr\:du\\
			=\int_0^1r^{-s-1}m_x(r)\int_0^r \frac{k_\varepsilon(u)}{|\log u|}\:du\:dr=\varepsilon\int_0^1m_x(r)r^{\varepsilon-s-1}dr.
		\end{multline}
		Combining \eqref{E:firstblock}, \eqref{E:large}, \eqref{E:secondblock}, taking $\varepsilon\to 0$ and using \eqref{E:basicapprox},
		\[\lim_{\varepsilon\to 0}\int_{\mathbb{R}^d}|x-y|^{\varepsilon-s}\mu(dy)=\lim_{\varepsilon\to 0} \varepsilon(s-\varepsilon)\int_0^1 m_x(r)r^{\varepsilon-s-1}dr=s\lim_{\varepsilon\to 0} \int_0^1 k_\varepsilon(u)f(u)\:du=sD_a^s\mu(x).\]
	\end{proof}

	\begin{remark}  We do not claim that the existence of the limit on the left-hand side of \eqref{E:limitofpot} entails the existence of $D_{a}^{s}\mu(x)$.
	\end{remark}
	
	The observation that \eqref{E:upperboundmu} is a sufficient condition is new and convenient for applications to processes in transient regimes, for which local times do not exist. 
	
	\begin{example}\mbox{}\label{Ex:classical}  For isotropic $\beta$-stable processes $X$ on $\mathbb{R}^d$, $0<\beta\leq 2$, $\beta<d$, $\beta\neq 1$, it was shown in \cite[Lemma 2.3 (b)]{PerkinsTaylor1987} that there is an event $\Omega_1$ of probability one such that, with a constant $c>0$, 
		\begin{equation}\label{E:PerkinsTaylor}
			\limsup_{r\to 0}\frac{\mu_{0,T}^{X(\omega,\cdot)}(B(x,r))}{r^\beta|\log r|}\leq c, \qquad x\in \{X(\omega,t):\ t\in [0,T]\}
		\end{equation}
		for all $\omega\in \Omega_1$. For $d\geq 3$ and $\beta=2$ we obtain the corresponding result for the special case of Brownian motion, for which \eqref{E:PerkinsTaylor} had already been conjectured \cite[p. 201]{Taylor1974}; see \cite[p. 2]{Demboetal2000} for further comments. The bound \eqref{E:PerkinsTaylor} gives \eqref{E:upperboundmu} with $s=\beta$ and $\ell(r)=|\log r|$ for such $\omega$ and $x$. Together with \cite[Theorem 1]{FalconerXiao1995} and Lemma \ref{L:limitsofpots} it follows that
		\[\lim_{\varepsilon\to 0}\varepsilon\;U^{d-\beta+\varepsilon}\mu_{0,T}^{X(\omega,\cdot)}(x) = \beta \;D_{a}^{\beta}\mu_{0,T}^{X(\omega,\cdot)}(x)=\beta\:c_\beta\]
		for all $\omega$ from an event $\Omega_2$ of full probability and all $x\in\{X(\omega,t):\ t\in [0,T]\}\setminus N_\omega$, where $N_\omega$ is a $\mu_{0,T}^{X(\omega,\cdot)}$-null set. Here $c_\beta$ is the constant in Remark \ref{R:literature} (ii). The choices $d\geq 3$ and $\beta=2$ give the result in the special case of (transient) Brownian motion.
	\end{example}
	
	\begin{remark} The local asymptotics in \cite[Theorem 3]{CiesielskiTaylor1962} and \cite[Theorem 1]{Ray1963} respectively \cite[Theorem 4]{Taylor1967} hold on events of probability one depending on the chosen point of the image (roughly speaking, on $x$), so that they do not combine easily with \cite[Theorem 1]{FalconerXiao1995}.
	\end{remark}

	\section{Proofs of the main results}\label{S:proofs}
	
	Given $\gamma>0$, let $\psi_\gamma:(0,\infty)\to (0,\infty)$ be defined as
	\begin{equation}\label{E:psi}
		\psi_\gamma(s):=\mathbf{1}_{(0,1)}(s)s^{-\gamma}+\mathbf{1}_{[1,\infty)}(s)\log (e+s),\qquad s>0.
	\end{equation}
	
	\begin{prop} 
		\label{prop:basic-estimate}
		Suppose that the process $X=(X_t)_{t\in [0,T]}$ satisfies Assumption \ref{a1} (i), let $q'\geq \max(Hd,1)$ and $\gamma<\frac{d}{q'}$. Then for any $0 < u < v\leq T$ and any $n \geq 1$, we have
		\begin{multline}\label{E:desired}
			\int_{[u, v]^n} \Big(\int_{(\mathbb{R}^{d})^n}  \prod_{k=1}^n\psi_\gamma(\Vert \xi_k\Vert)^{q'}\Big\lvert \mathbb{E} \exp \Big( i \sum_{j = 1}^n \langle \xi_j, X_{t_j}\rangle \Big) \Big\rvert^{q'} d\xi\Big)^{\frac{1}{q'}} dt  \\
			\leq c_7^n n^{\left(H\theta  + \frac{Hd}{q'}\right)n}(v-u)^{\left(1-\frac{Hd}{q'}\right)n}[\log(e+(v-u))]^{2 n d}
		\end{multline} 
		with a constant $c_7 > 0$ depending only on $T$, $d$, $\gamma$, $c_0$, $H$ and $q'$.
	\end{prop}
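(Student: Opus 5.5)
The plan is to reduce \eqref{E:desired} to a Dirichlet-type integral over the simplex. Three steps do this: change to increment variables in frequency, apply Assumption \ref{a1} (i) with a frequency-dependent choice of the exponents $k_{j,\ell}$, and then control the weights $\psi_\gamma$ one block at a time.

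\emph{Reduction to ordered times and change of variables.} The integrand is symmetric under simultaneous permutations of $(t_k)$ and $(\xi_k)$, so the left-hand side equals $n!$ times the same integral over $\{u<t_1<\dots<t_n<v\}$. For ordered times set $t_0:=0$ and $\eta_j:=\sum_{k=j}^n\xi_k$, so that $\xi_k=\eta_k-\eta_{k+1}$ with $\eta_{n+1}:=0$. Then
\[
\sum_j\langle \xi_j,X_{t_j}\rangle=\sum_j\langle\eta_j,X_{t_j}-X_{t_{j-1}}\rangle+\langle \eta_1,X_0\rangle .
\]
The term $\langle \eta_1,X_0\rangle$ should be harmless: either $X_0$ is deterministic, or I would absorb it by enlarging the partition slightly. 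The map $\xi\mapsto\eta$ is linear with determinant one.

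\emph{Applying Assumption \ref{a1} (i).} For each $j$ and $\ell$ I choose $k_{j,\ell}=4$ if $|\eta_j^\ell|(t_j-t_{j-1})^H\ge 1$ and $k_{j,\ell}=0$ otherwise. This gives
\[
|\mathbb{E}\exp(\dots)|\le c_0^n n^{H\theta n}\prod_{j,\ell}\min\bigl(1,(|\eta_j^\ell|\Delta_j^H)^{-4}\bigr),\qquad \Delta_j:=t_j-t_{j-1}.
\]
Here $\eta_j^\ell\neq0$ a.e., so the restriction $\xi_j\in(\mathbb{R}\setminus\{0\})^d$ is immaterial after relabelling. Ignoring the weights, integrating the $q'$-th power of each factor in $\eta_j^\ell$ gives $c\,\Delta_j^{-H}$. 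After taking the $1/q'$ root this yields $\prod_j\Delta_j^{-Hd/q'}$; for $j=1$ note that $\Delta_1=t_1\ge t_1-u$.

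\emph{Time integration.} Since $Hd/q'\le 1$, I would bound
\[
\int_{u<t_1<\dots<t_n<v}\prod_j (t_j-t_{j-1})^{-Hd/q'}dt\le \frac{\Gamma(1-Hd/q')^n (v-u)^{n(1-Hd/q')}}{\Gamma(n(1-Hd/q')+1)},
\]
where $t_0$ is replaced by $u$ in the first factor. Multiplied by $n!$, Stirling's formula gives $c^n n^{nHd/q'}$, which produces the claimed power $n^{(H\theta+Hd/q')n}$. The borderline case $Hd=q'$ would need a small perturbation of the exponent (or of the $\Gamma$-bound), and I would treat it separately.

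\emph{The weights (main obstacle).} The factor $\prod_k\psi_\gamma(\|\eta_k-\eta_{k+1}\|)^{q'}$ couples neighbouring variables, so I would integrate successively in $\eta_1,\eta_2,\dots,\eta_n$. Consider the step for $\eta_j$ with $\eta_{j+1}$ fixed. On $\|\eta_j-\eta_{j+1}\|<1$ I use $\gamma q'<d$, so $\int_{B(\eta_{j+1},1)}\|\eta_j-\eta_{j+1}\|^{-\gamma q'}d\eta_j\le c$, together with the bound $1$ on the characteristic-function factor. On the complement I use
\[
\log(e+\|a-b\|)\le 2\log(e+\|a\|)\log(e+\|b\|).
\]
The factor $\log(e+\|\eta_j\|)^{q'}$ is integrated against the decay $\min(1,(|\eta_j^\ell|\Delta_j^H)^{-4q'})$ and costs only $\Delta_j^{-H}$ times logarithmic corrections, while the factor $\log(e+\|\eta_{j+1}\|)^{q'}$ is carried over to the next integration. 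The delicate point is that the logarithmic corrections must not worsen the $\Delta_j$-exponent. They have to be collected into at most $[\log(e+(v-u))]^{2d}$ per index, using $\Delta_j\le v-u\le T$ and splitting each coordinate into at most two log factors; this gives the exponent $2nd$. I expect this bookkeeping, rather than the simplex integral, to require the most care.
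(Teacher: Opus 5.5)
\textbf{There is a genuine gap, at the logarithmic bookkeeping, and it cannot be closed.} Your outline matches the paper's proof almost step for step: ordered times, the variables $\eta_j=\sum_{k\ge j}\xi_k$, the coordinatewise choice $k_{j,\ell}\in\{0,4\}$, the split $\|\eta_j-\eta_{j+1}\|<1$ versus $\ge 1$ with $\log(e+\|a-b\|)\le 2\log(e+\|a\|)\log(e+\|b\|)$, then Dirichlet integrals and Stirling. It breaks exactly at the step you flag as delicate. Integrating the carried logarithm against the decay gives, per coordinate,
\[
\int_{\mathbb{R}}[\log(e+|x|)]^{2q'}\min\bigl(1,(|x|\Delta_j^{H})^{-4q'}\bigr)\,dx ,
\]
which is comparable to $\Delta_j^{-H}[\log(e+\Delta_j^{-H})]^{2q'}$ as $\Delta_j\to 0$. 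So the correction is $\log(e+\Delta_j^{-H})$, which blows up for small gaps. The relation $\Delta_j\le v-u$ bounds it from below, not from above, so it can never be dominated by $\log(e+(v-u))\le\log(e+T)$.

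No bookkeeping repairs this, because \eqref{E:desired} is false as stated: its logarithmic factor is just a constant to the power $n$. Take one-dimensional Brownian motion, which satisfies Assumption \ref{a1} (i) with $H=\frac12$, $\theta=0$ by independence of increments. Choose $n=1$, $q'=1$, $\gamma=\frac12$, $u=v/2$ and $v\le\frac1{16}$. For $t\le v$ and $v^{-1/4}\le|\xi|\le v^{-1/2}$ you have $e^{-\xi^2t/2}\ge e^{-1/2}$ and $\psi_\gamma(|\xi|)\ge\frac14\log v^{-1}$. Hence the left-hand side is at least $c\,v^{1/2}\log v^{-1}$, while the right-hand side is at most $c_7v^{1/2}[\log(e+T)]^2$; let $v\to 0$.

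The $n$-dependence fails too. For fixed $u<v$ and $n\to\infty$ the typical gap is of order $(v-u)/n$. Restrict to $(-1)^j\eta_j\in[\frac12\Delta_j^{-1/2},\Delta_j^{-1/2}]$, use $\psi_\gamma(s)\ge\log(e+s)/\log(e+1)$, and apply Jensen under the Dirichlet law of the gaps. This produces an extra factor of order at least $(c\log n)^{n/2}$, so even the power $n^{(H\theta+Hd/q')n}$ is too optimistic.

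\textbf{The paper's proof has the same slip.} In \eqref{E:verysmallclaim}, \eqref{E:smallclaim} and the application of \eqref{E:elem} with $a=(t_k-t_{k-1})^{-H}$, the factor $[\log(e+(t_k-t_{k-1}))]^{2q'}$ should read $[\log(e+(t_k-t_{k-1})^{-H})]^{2q'}$. What the scheme genuinely yields is the time integrand $\prod_k\Delta_k^{-Hd/q'}[\log(e+\Delta_k^{-H})]^{2d}$, times $c^n n^{H\theta n}$. For $q'>Hd$ there are two ways to finish.
\begin{enumerate}
\item[(a)] Absorb $[\log(e+\Delta^{-H})]^{2d}\le C_\varepsilon\Delta^{-\varepsilon}$. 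This gives \eqref{E:desired} with $Hd/q'$ replaced by $Hd/q'+\varepsilon$ and no logarithmic factor.
\item[(b)] Split $\log(e+\Delta_k^{-H})\le\log(e+(n/(v-u))^H)+\log(1+((v-u)/(n\Delta_k))^H)$ and redo the Dirichlet integrals. This gives a factor of the form $[\log(e+n/(v-u))]^{2nd}$ in place of $[\log(e+(v-u))]^{2nd}$.
\end{enumerate}
Either way, the exponents fed into Corollary \ref{cor:basic-estimate} and the tail bounds change.

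\textbf{Two smaller points.}
\begin{enumerate}
\item[(1)] The borderline $q'=Hd$ cannot be rescued by perturbing, because the left-hand side is then infinite for $n\ge 2$. For example, for Brownian motion in $\mathbb{R}^2$ with $q'=1$, the bound $\psi_\gamma\ge 1$ gives an inner integral of at least $(2\pi)^n\prod_k\Delta_k^{-1}$, which is not integrable near the diagonal. That case must be excluded; only $q'>Hd$ is used later, via \eqref{E:intermedcond}.
\item[(2)] A random $X_0$ cannot be absorbed by enlarging the partition, since Assumption \ref{a1} (i) only controls increments starting from $t_0=0$. You need $X_0$ deterministic, which holds in all the examples and which the paper tacitly assumes.
\end{enumerate}
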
 
	
	\begin{remark}\mbox{}
		\begin{enumerate}
			\item[(i)]	For $Hd<1$ we can choose $q'=1$; this corresponds to the local time regime.
			\item[(ii)] Proposition \ref{prop:basic-estimate} and its proof are a substantial upgrade of the basic idea introduced in \cite[Lemma 3.2]{bouf2} and used in \cite[Proposition 3.2]{SSV}. New challenges are the singularity at zero contributed by the function $\psi_\gamma$ and the need for a careful analysis of logarithmic terms. It is exactly due to $\psi_\gamma$ and these logarithmic terms that we arrive at the natural exponents in Theorem \ref{main}.
		\end{enumerate}
	\end{remark}
	
	\begin{remark}\label{R:lHospital}
		The following proof uses that for any $a$ larger than a certain threshold $a(q')>0$, 
		\begin{equation}\label{E:elem}
			\int_a^\infty [\log(e+x)]^{2q'}x^{-4q'}dx \leq \frac{1}{4q'-2}[\log(e+a)]^{2q'}a^{-4q'+1},
		\end{equation}
		as can be seen by l'Hospital's rule. It also uses the fact that
		\begin{equation}\label{E:elem2}		
			b(q',H):=\int_{T^{-H}}^\infty [\log(e+x)]^{2q'}x^{-4q'}dx<\infty.
		\end{equation}
	\end{remark}

	\begin{proof}	
		Performing the change of variables $\eta_j \coloneqq \sum_{\ell=j}^n\xi_\ell$, $j=1,\ldots ,n$ with the convention that $\eta_{n+1}:=0$, we see that
		\begin{align}\label{E:ansatz}
			I &:= \int_{[u, v]^n} \Big(\int_{(\mathbb{R}^{d})^n} \prod_{k=1}^n \psi_\gamma(\Vert \xi_k\Vert)^{q'} \big\lvert \mathbb{E} \exp \Big( i \sum_{j = 1}^n \langle\xi_j, X_{t_j}\rangle \Big) \Big\rvert^{q'} d\xi\Big)^{\frac{1}{q'}} dt  \notag\\
			&= n!   \int_{u\leq t_1<t_2<\ldots < t_n\leq v} \Big(\int_{(\mathbb{R}^{d})^n}  \prod_{k=1}^n \psi_\gamma(\Vert \xi_k\Vert)^{q'}  \Big\lvert \mathbb{E} \exp \Big( i \sum_{j = 1}^n \langle\xi_j, X_{t_j}\rangle \Big) \Big\rvert^{q'} d\xi\Big)^{\frac{1}{q'}} dt  \notag\\
			&=n!   \int_{u\leq t_1<t_2<\ldots < t_n\leq v} \Big(\int_{(\mathbb{R}^{d})^n} \prod_{k=1}^n \psi_\gamma(\Vert \eta_k-\eta_{k+1}\Vert)^{q'} \times\notag\\
			&\qquad\qquad\qquad\qquad\qquad\qquad\times \Big\lvert \mathbb{E} \exp \Big( i \sum_{j = 1}^n \langle\eta_j, (X_{t_j}-X_{t_{j-1}})\rangle \Big) \Big\rvert^{q'} d\eta\Big)^{\frac{1}{q'}} dt.
		\end{align}

		To enable an efficient use of Assumption \ref{a1} (i), we decompose the domain of integration simultaneously in two ways, once according to the size of the differences $\eta_k-\eta_{k+1}\in\mathbb{R}^d$ and once according to the size of the single coordinates $\eta_k^\ell\in \mathbb{R}$ relative to the corresponding time differences $t_k-t_{k-1}$. Given $\eta_0\in\mathbb{R}^d$, let 
		\[D_0(\eta_0):=B_1(\eta_0)\quad \text{and}\quad D_1(\eta_0):=\mathbb{R}^d\setminus B_1(\eta_0).\]	
		For any fixed $t=(t_1,t_2,...,t_n)$ with $u\leq t_1<t_2<\ldots < t_n\leq v$ and any $k\in \{1,\dots, n\}$ let 
		\[J_{k,0}(t):=[-(t_k-t_{k-1})^{-H},(t_k-t_{k-1})^{-H}]\quad \text{and}\quad J_{k,1}(t):=\mathbb{R}\setminus J_{k,0}(t);\]
		here we agree to set $t_0:=u$. Given an arbitrary word $z_k=z_k^1\cdots\, z_k^d \in \{0,1\}^d$, we set
		\[Q_{z_k}(t):=J_{1,z_k^1}(t)\times \cdots \times J_{d,z_k^d}(t);\]
		this yields a partition $\mathbb{R}^d=\bigcup_{z_k\in \{0,1\}^d} Q_{z_k}(t)$ of $\mathbb{R}^d$. 
		The inner integral in \eqref{E:ansatz} equals
		\begin{multline}\label{E:inwords}
			\sum_{w\in \{0,1\}^n}\sum_{z_n\in \{0,1\}^d}\int_{D_{w_n}(\eta_{n+1})\cap Q_{z_n}(t)}\sum_{z_{n-1}\in \{0,1\}^d}\int_{D_{w_{n-1}}(\eta_{n})\cap Q_{z_{n-1}}(t)}\cdots  \\
			\sum_{z_1\in \{0,1\}^d}\int_{D_{w_1}(\eta_{2})\cap Q_{z_1}(t)}\prod_{k=1}^n\psi_\gamma(\Vert \eta_k-\eta_{k+1}\Vert)^{q'}\Big\lvert \mathbb{E} \exp \Big( i \sum_{j = 1}^n \langle\eta_j, (X_{t_j}-X_{t_{j-1}})\rangle \Big) \Big\rvert^{q'} d\eta_1\ \cdots\  d\eta_{n-1}d\eta_n,
		\end{multline}
		where in the first summation $w=w_1\cdots\, w_n$ runs through all words  of length $n$ over $\{0,1\}$, while in the other sums the $z_k$ run through the words of length $d$ over $\{0,1\}$. Given a fixed word $w\in \{0,1\}^n$ and an arbitrary choice of numbers $\kappa_{j,\ell}(w_j,z_j)\in \{0,4\}$, $j=1,\dots,n$, $\ell=1,\dots,d$, Assumption \ref{a1} (i) ensures that the summand in \eqref{E:inwords} indexed by $w$ does not exceed
		\begin{align}\label{E:iterate}
			&c_0^{nq'} n^{H\theta n q'}\sum_{z_n\in \{0,1\}^d}\int_{D_{w_n}(\eta_{n+1})\cap Q_{z_n}(t)}\sum_{z_{n-1}\in \{0,1\}^d}\int_{D_{w_{n-1}}(\eta_{n})\cap Q_{z_{n-1}}(t)}\cdots \sum_{z_1\in \{0,1\}^d}\int_{D_{w_1}(\eta_{2})\cap Q_{z_1}(t)} \notag\\
			&\hspace{75pt}\prod_{k=1}^n\psi_\gamma(\Vert \eta_k-\eta_{k+1}\Vert)^{q'}\prod_{j=1}^n\prod_{\ell=1}^d \lvert\eta_j^\ell\rvert^{-q'\kappa_{j,\ell}(w_j,z_j)}(t_j-t_{j-1})^{-q'H \kappa_{j,\ell}(w_j,z_j)}d\eta_1\ \cdots\ d\eta_{n-1}d\eta_n\notag\\
			&=c_0^{nq'} n^{H\theta n q'}\sum_{z_n\in \{0,1\}^d}\int_{D_{w_n}(\eta_{n+1})\cap Q_{z_n}(t)}\psi_\gamma(\Vert \eta_n-\eta_{n+1}\Vert)^{q'}\prod_{\ell_n=1}^d \lvert\eta_n^{\ell_n}\rvert^{-q'\kappa_{n,\ell_n}(w_n,z_n)}\notag\\
			&\hspace{40pt}\sum_{z_{n-1}\in \{0,1\}^d}\int_{D_{w_{n-1}}(\eta_{n})\cap Q_{z_{n-1}}(t)}\psi_\gamma(\Vert \eta_{n-1}-\eta_n\Vert)^{q'}\prod_{\ell_{n-1}=1}^d \lvert\eta_{n-1}^{\ell_{n-1}}\rvert^{-q'\kappa_{n-1,\ell_{n-1}}(w_{n-1},z_{n-1})}\ \cdots\notag\\
			&\hspace{80pt}\sum_{z_1\in \{0,1\}^d}\int_{D_{w_1}(\eta_{2})\cap Q_{z_1}(t)}\psi_\gamma(\Vert \eta_1-\eta_{2}\Vert)^{q'}\prod_{\ell_1=1}^d \lvert\eta_1^{\ell_1}\rvert^{-q'\kappa_{1,\ell_1}(w_1,z_1)}d\eta_1\ \cdots\ d\eta_{n-1}d\eta_n\ \times\\
			&\hspace{300pt}\times \prod_{j=1}^n (t_j-t_{j-1})^{-q'H \sum_{\ell=1}^d \kappa_{j,\ell}(w_j,z_j)}.\notag
		\end{align}
		
		We aim to estimate these iterated sums in an inductive manner, beginning with the innermost and progressing to the outermost. In the case $w_k=1$ the integral with respect to $\eta_k$ involves the factor $[\log(e+\Vert \eta_k-\eta_{k+1}\Vert)]^{q'}$; this is immediate from \eqref{E:psi}. Since the logarithm is increasing and satisfies $\log(e+s)\geq 1$ for $s\geq 0$, we have 
		\begin{multline}\label{E:logest}
			\log(e+\Vert \eta_k-\eta_{k+1}\Vert)\leq \log(e+\|\eta_k\|+\|\eta_{k+1}\|)\leq \log(e+\Vert \eta_k\Vert)+\log(e +\Vert\eta_{k+1}\Vert)\\
			\leq 2\log(e+\Vert \eta_k\Vert)\log(e +\Vert\eta_{k+1}\Vert)
		\end{multline}
		for all $\eta_k,\eta_{k+1}\in\mathbb{R}^d$. To use this bound in an inductive argument, we have to cover the factor involving $\eta_k$ by the current integration with respect to $\eta_k\in B_1(\eta_{k+1})$, while the factor involving $\eta_{k+1}$ gets passed on to the next iteration step. We claim that suitable choices of the numbers $\kappa_{k,\ell}(w_k,z_k)$ ensure that
		\begin{multline}\label{E:claim}
			\sum_{z_k\in \{0,1\}^d}\int_{D_{w_{k}}(\eta_{k+1})\cap Q_{z_k}(t)}\psi_\gamma(\Vert \eta_k-\eta_{k+1}\Vert)^{q'}[\log(e +\Vert\eta_{k}\Vert)]^{q'}
			\prod_{\ell=1}^d \lvert\eta_{k}^{\ell}\rvert^{-q'\kappa_{k,\ell}(w_{k},z_k)}d\eta_k\times\\
			\times(t_k-t_{k-1})^{-q'H \sum_{\ell=1}^d \kappa_{k,\ell}(w_k,z_k)}\\
			\leq c(d,H,q',\gamma)[\log(e+(v-u))]^{2dq'}(t_k-t_{k-1})^{-Hd}[\log(e +\Vert\eta_{k+1}\Vert)]^{q'}
		\end{multline}
		for any $k=1,...,n$, where 
		\[c(d,H,q',\gamma):=\max\Big(\frac{\omega_d d}{d-\gamma q'} 2^{q'}T^{Hd}[\log(e +1)]^{q'},2^{d+q'}d^{q'}c(q',H)\Big)\]
		with $c(q',H):=\max(2,b(q',H)a(q')^{4q'-1})$; here $a(q')$ and $b(q',H)$ are as in Remark \ref{R:lHospital}.
		
		If so, then a repeated application of \eqref{E:claim} to \eqref{E:iterate}, beginning with the innermost sum and progressing to the outermost, shows that \eqref{E:iterate} is bounded by 
		\[c(d,H,q',\gamma)^n c_0^{q' n} n^{H\theta n q'}[\log(e+(v-u))]^{2ndq'}\prod_{k=1}^n(t_k-t_{k-1})^{-Hd};\]
		note that $1\leq \log(e +\Vert\eta_{1}\Vert)$ and recall that $\eta_{k+1}=0$. Clearly \eqref{E:inwords} is then bounded by $2^n$ times this quantity.
		As a consequence,
		\[I\leq 2^nc(d,H,q',\gamma)^{\frac{n}{q'}}c_0^n n! n^{H\theta n}[\log(e+(v-u))]^{2nd} \int_{u\leq t_1<t_2<\ldots < t_n\leq v} \prod_{m=1}^n(t_m-t_{m-1})^{-\frac{Hd}{q'}}dt.\]
		A successive evaluation of Euler integrals (Beta functions), together with obvious cancellations, gives
		\[\int_{u\leq t_1<t_2<\ldots < t_n\leq v} \prod_{m=1}^n(t_m-t_{m-1})^{-\frac{Hd}{q'}}dt=\frac{\Gamma(1-\frac{Hd}{q'})^n}{n(1-\frac{Hd}{q'})\Gamma(n(1-\frac{Hd}{q'}))}(v-u)^{n(1-\frac{Hd}{q'})},\]
		and using the fact that, by Stirling's approximation,
		\[\frac{n!}{\Gamma(n(1-\frac{Hd}{q'}))}\leq c(d,H,q')^n n^{n\frac{Hd}{q'}}\]
		for large $n$ and with a suitable constant $c(d,H,q')>0$, we arrive at \eqref{E:desired}. The same bound for smaller $n$ can be guaranteed by readjusting the constant.
		
		It remains to show \eqref{E:claim}. Suppose first that $w_k=0$. Then $D_{0}(\eta_{k+1})=B_1(\eta_{k+1})$, and estimating similarly as in \eqref{E:logest}, we observe that
		\[\log(e+\|\eta_k\|)\leq 2\log(e+\|\eta_k-\eta_{k+1}\|)\log(e+\|\eta_{k+1}\|)\leq 2\log(e+1)\log(e+\|\eta_{k+1}\|)\]
		for all $\eta_k\in D_0(\eta_{k+1})$. Choosing $\kappa_{k,\ell}(0,z_k):=0$ for all $z_k$ and all $\ell$, the left-hand side of \eqref{E:claim} becomes
		\begin{align}
			\sum_{z_k\in \{0,1\}^d}\int_{D_0(\eta_{k+1})\cap Q_{z_k}(t)}&\Vert \eta_k-\eta_{k+1}\Vert^{-\gamma q'}[\log(e +\Vert\eta_{k}\Vert)]^{q'}d\eta_k \notag\\
			&\leq \omega_d d\: 2^{q'}[\log(e +1)]^{q'}[\log(e +\Vert\eta_{k}\Vert)]^{q'}\int_0^1 r^{d-\gamma q'-1}dr\notag\\
			&=\frac{\omega_d d}{d-\gamma q'}\:2^{q'}[\log(e +1)]^{q'}[\log(e +\Vert\eta_{k}\Vert)]^{q'},\notag
		\end{align}
		and using $1\leq T^{Hd}(t_k-t_{k-1})^{-Hd}$ and $1\leq [\log(e+(v-u))]^{2dq'}$, we arrive at \eqref{E:claim} for $w_k=0$. Now suppose that that $w_k=1$. The estimate 
		\begin{equation}\label{E:logclaim}
			\log(e+\Vert \eta_k\Vert)\leq d\prod_{\ell=1}^d\log(e+|\eta_k^\ell|)
		\end{equation}
		follows similarly as \eqref{E:logest}.
		We choose $\kappa_{k,\ell}(1,z_k) =4z_k^\ell$ for all $z_k$ and all $\ell$. For $z_k^\ell=0$ then obviously 
		\begin{multline}\label{E:verysmallclaim}
			\int_{J_{k,0}} [\log(e+|\eta_k^\ell|)]^{2q'}|\eta_k^\ell|^{-q'\kappa_{k,\ell}(1,z_k)}d\eta_k^\ell (t_k-t_{k-1})^{-Hq'\kappa_{k,\ell}(1,z_k)}\\
			\leq  c(q',H)[\log(e+(t_k-t_{k-1}))]^{2q'}(t_k-t_{k-1})^{-H}.
		\end{multline}
		For $z_k^\ell=1$ we have
		\begin{align}\label{E:smallclaim}
			\int_{J_{k,1}} [\log(e+|\eta_k^\ell|)]^{2q'}&|\eta_k^\ell|^{-q'\kappa_{k,\ell}(1,z_k)}d\eta_k^\ell (t_k-t_{k-1})^{-Hq'\kappa_{k,\ell}(1,z_k)}\\
			&\leq  c(q',H)[\log(e+(t_k-t_{k-1}))]^{2q'}(t_k-t_{k-1})^{-H(-4q'+1)}(t_k-t_{k-1})^{-4Hq'} \notag\\
			&= c(q',H)[\log(e+(t_k-t_{k-1}))]^{2q'}(t_k-t_{k-1})^{-H}.\notag
		\end{align}
		This can be seen using Remark \ref{R:lHospital}: In the case that $(t_k-t_{k-1})^{-H}>a(q')$,
		an application of \eqref{E:elem} with $a=(t_k-t_{k-1})^{-H}$ and the fact that $q'\geq 1$ give the inequality in \eqref{E:smallclaim}.
		To see \eqref{E:smallclaim} in the remaining case that $T^{-H}\leq (t_k-t_{k-1})^{-H}\leq a(q')$ we can use \eqref{E:elem2},
		together with the fact that $(t_k-t_{k-1})^{-4Hq'}\leq a(q')^{4q'-1}(t_k-t_{k-1})^{-H}$. Combining \eqref{E:verysmallclaim} and \eqref{E:smallclaim} with \eqref{E:logest} and \eqref{E:logclaim}, we obtain the estimate
		\begin{multline}
			\sum_{z_k\in \{0,1\}^d}\int_{D_1(\eta_{k+1})\cap Q_{z_k}(t)}[\log(e+\Vert \eta_k-\eta_{k+1}\Vert)]^{q'}[\log(e +\Vert\eta_{k}\Vert)]^{q'}
			\prod_{\ell=1}^d \lvert\eta_{k}^{\ell}\rvert^{-q'\kappa_{k,\ell}(1,z_k)}d\eta_k\times\\
			\hspace{150pt}\times(t_k-t_{k-1})^{-q'H \sum_{\ell=1}^d \kappa_{k,\ell}(1,z_k)}\\
			\leq 2^{d+q'}c(q',H)^d d^{q'}[\log(e+(t_k-t_{k-1}))]^{2dq'}(t_k-t_{k-1})^{-Hd}[\log(e+\|\eta_{k+1}\|)]^{q'},
		\end{multline}
		which confirms \eqref{E:claim} also for $w_k=1$.
	\end{proof}

	\begin{cor}
		\label{cor:basic-estimate}
		Suppose that $X=(X_t)_{t\in[0,T]}$ satisfies Assumption \ref{a1} (i). Let $0\leq \alpha < d$ and let $0\leq \beta \leq 1$ be such that 
		\begin{equation}
			\label{eq:key-restriction}
			\beta - \alpha < \min\left(1,\frac{1-Hd}{2H},\frac{1-Hd}{H} \right).
		\end{equation}
		Then there exists a constant $c_8>0$ depending only on $T$, $d$, $H$, $\alpha$ and $\beta$, such that for any times $0 < u < v\leq T$ we have
		\begin{multline}
			\label{eq:bound_fourier}   \int_{[u, v]^n} \int_{(\mathbb{R}^{d})^n} \prod_{k =1}^n \|\xi_k\|^{\beta-\alpha} \Big\lvert \mathbb{E} \exp \Big( i \sum_{j = 1}^n \langle \xi_j, X_{t_j}\rangle \Big) \Big\rvert d\xi dt  \\
			\leq c_8^n n^{H(\theta+d-\alpha+\beta)n}(v-u)^{(1-H(d-\alpha+\beta))n}[\log(e+(v-u))]^{2nd\zeta}
		\end{multline}
		where $\zeta=1$ if $\beta<\alpha$ while $\zeta=0$ if $\beta\geq \alpha$.
	\end{cor}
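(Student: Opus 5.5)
The plan is not to invoke Proposition \ref{prop:basic-estimate} as a black box. Applied with $q'=1$ it only yields the time factor $(v-u)^{(1-Hd)n}$ and needs $Hd\le 1$, whereas \eqref{eq:bound_fourier} asks for $(v-u)^{(1-H(d-\alpha+\beta))n}$. Instead I would rerun its proof with $q'=1$ and the weight $\|\xi_k\|^{\beta-\alpha}$ in place of $\psi_\gamma(\|\xi_k\|)$, adapting the decomposition to the time scale.

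\emph{Reduction.} As in \eqref{E:ansatz}, symmetrize to the simplex $u\le t_1<\dots<t_n\le v$, which costs a factor $n!$. Then substitute $\eta_j=\sum_{\ell\ge j}\xi_\ell$, so that $\xi_k=\eta_k-\eta_{k+1}$ with $\eta_{n+1}=0$. Assumption \ref{a1} (i) is applied coordinatewise through the partition $Q_{z_k}(t)$, exactly as before.

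\emph{Case $\beta<\alpha$ ($\zeta=1$).} Put $\gamma:=\alpha-\beta\in(0,d)$. The key change is to replace the fixed radius $1$ in $D_0(\eta_{k+1})$ by $\rho_k:=(t_k-t_{k-1})^{-H}$.
\begin{itemize}
\item On the ball $B_{\rho_k}(\eta_{k+1})$, bound the characteristic function by $c_0^n n^{H\theta n}$ (all $\kappa=0$). The $\eta_k$-integral of $\|\eta_k-\eta_{k+1}\|^{-\gamma}$ then equals $\frac{d\omega_d}{d-\gamma}\rho_k^{d-\gamma}=c\,(t_k-t_{k-1})^{-H(d-\gamma)}$.
\item Off this ball, use $\|\eta_k-\eta_{k+1}\|^{-\gamma}\le \rho_k^{-\gamma}=(t_k-t_{k-1})^{H\gamma}$. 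Choose $\kappa_{k,\ell}=4z_k^\ell$ as in \eqref{E:verysmallclaim}--\eqref{E:smallclaim}. There are no logarithmic weights now, so the one-dimensional integrals give at most $c\,(t_k-t_{k-1})^{-H}$ per coordinate, hence $c\,(t_k-t_{k-1})^{-Hd}$ in total.
\end{itemize}
Both pieces therefore yield $c\,(t_k-t_{k-1})^{-H(d-\gamma)}$, independently of $\eta_{k+1}$. Iterating from the innermost integral outwards gives the bound $c^n c_0^n n! \,n^{H\theta n}\prod_k (t_k-t_{k-1})^{-H(d-\gamma)}$ for the integrand. The Euler integral of this product is finite precisely when $H(d-\gamma)<1$, i.e.\ $\beta-\alpha<\frac{1-Hd}{H}$, which is part of \eqref{eq:key-restriction}. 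The Gamma-function formula plus Stirling, as at the end of the proof of Proposition \ref{prop:basic-estimate}, then give $n^{H(\theta+d-\alpha+\beta)n}(v-u)^{(1-H(d-\alpha+\beta))n}$. The factor $[\log(e+(v-u))]^{2nd}\ge 1$ is harmless here.

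\emph{Case $\beta\ge\alpha$ ($\zeta=0$).} Put $\delta:=\beta-\alpha\in[0,1]$. Use $\|\eta_k-\eta_{k+1}\|^\delta\le\|\eta_k\|^\delta+\|\eta_{k+1}\|^\delta$ and expand the product into $2^n$ terms. Each term has the form $\prod_k\|\eta_k\|^{\delta m_k}$ with $m_k\in\{0,1,2\}$ and $\sum_k m_k=n$.
\begin{itemize}
\item For each term, integrate out $\eta_k$, now fully decoupled, using $\|\eta_k\|^{\delta m_k}\le d^{\delta m_k}\prod_\ell|\eta_k^\ell|^{\delta m_k}$.
\item On $J_{k,0}$ take $\kappa=0$; on $J_{k,1}$ take $\kappa=4$. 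Convergence needs $\delta m_k<3$, which holds since $2\delta<3$.
\item Each coordinate contributes $c\,(t_k-t_{k-1})^{-H(1+\delta m_k)}$, so each $k$ contributes $(t_k-t_{k-1})^{-H(d+d\delta m_k)}$ up to constants.
\end{itemize}
Here I expect the main bookkeeping obstacle. The split into coordinates must not inflate $\delta$ to $d\delta$. I would avoid this by integrating $\|\eta_k\|^{\delta m_k}$ radially on the scale $\rho_k$ instead: on $B_{\rho_k}$ this gives $\rho_k^{d+\delta m_k}$, and outside one applies the coordinate decay with $\kappa=4$, where $\|\eta_k\|^{\delta m_k}\lesssim \max_\ell|\eta_k^\ell|^{\delta m_k}$. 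The outcome should be $(t_k-t_{k-1})^{-H(d+\delta m_k)}$. The Euler integrals are then finite iff $H(d+2\delta)<1$, i.e.\ $\delta<\frac{1-Hd}{2H}$, which is again contained in \eqref{eq:key-restriction}.

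Since the exponents sum to $nH(d+\delta)$, the multi-Beta formula produces $\Gamma(n(1-H(d+\delta)))^{-1}(v-u)^{n(1-H(d+\delta))}$ times bounded Gamma factors. Stirling, the factor $2^n$ and $n!$ then give $c_8^n n^{H(\theta+d+\delta)n}$, as claimed. The constraint $\delta\le 1$, reflected in $\min(1,\cdot)$, keeps the pointwise estimate $\|\eta_k-\eta_{k+1}\|^\delta\le\|\eta_k\|^\delta+\|\eta_{k+1}\|^\delta$ valid with constant $1$. All constants depend only on $T,d,H,\alpha,\beta$, and $c_0$.
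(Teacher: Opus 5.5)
Your proof is correct in substance, but for $\beta<\alpha$ it takes a different route from the paper.

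The paper does not rerun the iteration with the singular weight. It applies H\"older's inequality in $\xi$ with the exponent $q'>1$ from \eqref{E:qprime}, so that $q(\beta-\alpha)=-d$ and $\|\xi\|^{-d}\psi_\gamma(\|\xi\|)^{-q}$ is integrable on $\mathbb{R}^d$. It then invokes Proposition \ref{prop:basic-estimate} with this $q'$. Both the exponent $1-\frac{Hd}{q'}=1-H(d-\alpha+\beta)$ and the factor $[\log(e+(v-u))]^{2nd}$ come from that proposition; the logarithms are the price of the weight $\psi_\gamma$.

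You keep $q'=1$ and instead adapt the splitting radius to the time scale, $\rho_k=(t_k-t_{k-1})^{-H}$. The ball $B_{\rho_k}(\eta_{k+1})$ contributes $\frac{d\omega_d}{d-\gamma}\rho_k^{d-\gamma}$. Off the ball, $\|\eta_k-\eta_{k+1}\|^{-\gamma}\le\rho_k^{-\gamma}$ together with the $Q_{z_k}$ split gives at most $(8/3)^d\rho_k^{d-\gamma}$, uniformly in $\eta_{k+1}$. This is valid and needs only $H(d-\alpha+\beta)<1$. It bypasses H\"older, $\psi_\gamma$ and Remark \ref{R:lHospital}, and it even gives the bound without logarithms. (These are harmless anyway, since $[\log(e+(v-u))]^{2nd}\le[\log(e+T)]^{2dn}$ is absorbed into $c_8^n$.) The paper's route has the merit of deriving the corollary from a single $\psi_\gamma$-weighted $L^{q'}$ estimate; for the corollary itself, yours is shorter.

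For $\beta\ge\alpha$ the paper only cites Proposition 3.2 of SSV, whereas you reprove it. Your expansion via $\|\eta_k-\eta_{k+1}\|^\delta\le\|\eta_k\|^\delta+\|\eta_{k+1}\|^\delta$ and the Dirichlet-integral count are fine, but the per-$k$ step needs tidying.
\begin{itemize}
\item The inequality $\|\eta\|^{\delta m}\le d^{\delta m}\prod_\ell|\eta^\ell|^{\delta m}$ is false: let one coordinate vanish.
\item Your radial repair cannot use $\kappa=4$ in every coordinate outside the ball, because $\prod_\ell|\eta^\ell|^{-4}$ is not integrable near the coordinate hyperplanes.
\item The fix is to keep the $Q_{z_k}$ split on all of $\mathbb{R}^d$ and use $\|\eta_k\|^{\delta m_k}\le d^{\delta m_k/2}\sum_\ell|\eta_k^\ell|^{\delta m_k}$. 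Each summand raises the power in a single coordinate only. Since $\delta m_k\le 2<3$, this yields exactly $c\,\rho_k^{d+\delta m_k}$, as you predicted.
\end{itemize}
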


	\begin{remark}\label{rem:parameters}
		The condition \eqref{eq:key-restriction} on $\beta$ implies that
		\begin{equation}\label{E:intermedcond}		
			1 - \frac{\alpha-\beta}{d} <\frac{1}{Hd}.
		\end{equation}		
		Since $\beta\geq 0$, \eqref{E:intermedcond} requires  $\alpha > d-\frac{1}{H}$. This is possible even if $Hd\geq 1$ (provided that $\alpha$ is sufficiently large), in which case the right-hand side of \eqref{eq:key-restriction} equals $\frac{1-Hd}{H}$. 
		If $\beta\geq \alpha$, then \eqref{E:intermedcond} implies $Hd<1$ (the local time regime) and, in particular, the right-hand side of \eqref{eq:key-restriction} equals $\min\left(1,\frac{1-Hd}{2H} \right)$, which is the known standard bound in the case $\alpha=0$. We are mainly interested in the case $\beta<\alpha$ (which excludes $\alpha=0$). In this case there is a unique number $q'>1$ such that 
		\begin{equation}\label{E:qprime}
			1 - \frac{\alpha-\beta}{d} = \frac{1}{q'}.  
		\end{equation}
	\end{remark}

	\begin{proof}[Proof of Corollary \ref{cor:basic-estimate}] For the case $\beta\geq \alpha$ (where $\zeta=0$) the result follows by \cite[Proposition 3.2]{SSV}, applied with $\beta - \alpha$ in place of $\eta$. We therefore assume that $\beta < \alpha$. H\"older's inequality with $q'$ as in \eqref{E:qprime} and $\frac{1}{q}+\frac{1}{q'} = 1$, applied to the inner integral, gives
		\begin{multline}
			\int_{[u, v]^n} \int_{(\mathbb{R}^{d})^n} \prod_{k =1}^n \|\xi_k\|^{\beta-\alpha} \Big\lvert \mathbb{E} \exp \Big( i \sum_{j = 1}^n \langle \xi_j, X_{t_j}\rangle \Big) \Big\rvert d\xi dt \leq \Big(\int_{(\mathbb{R}^{d})^n} \prod_{k =1}^n \frac{\|\xi_k\|^{{q(\beta-\alpha)}}}{\psi_\gamma(\Vert \xi_k\Vert)^q}   d\xi\Big)^{\frac{1}{q}} \times\notag\\
			\times \int_{[u, v]^n} \Big(\int_{(\mathbb{R}^{d})^n} \prod_{k=1}^n \psi_\gamma(\Vert \xi_k\Vert)^{q'}\Big\lvert \mathbb{E} \exp \Big( i \sum_{j = 1}^n \langle \xi_j, X_{t_j}\rangle \Big) \Big\rvert^{q'} d\xi\Big)^{\frac{1}{q'}} dt.
		\end{multline}
		The first factor on the right-hand side equals 
		\[\Big(\prod_{k=1}^n \int_{\mathbb{R}^d} \frac{\|\xi_k\|^{{q(\beta-\alpha)}}}{\psi_\gamma(\Vert \xi_k\Vert)^q}   d\xi_k\Big)^{1/q},\] 
		and since $q(\beta-\alpha) =-d$ and $q>1$, the individual integrals all equal
		\[\int_{B_1}\|\xi_1\|^{\gamma q-d}d\xi_1+\int_{\mathbb{R}^d\setminus B_1}\frac{\|\xi_1\|^{-d}}{[\log(e+\|\xi_1\|)]^q}d\xi_1\leq c(d,q),\]
		where $B_1$ denotes the unit ball in $\mathbb{R}^d$ and $c(d,q)$ is a constant depending only on $d$ and $q$. For the second factor on the right-hand side, we can apply Proposition \ref{prop:basic-estimate} since $q'>Hd$ by \eqref{E:intermedcond} and \eqref{E:qprime}.
		
	\end{proof}

	\begin{prop}
		\label{prop:moment-bounds}
		Suppose that $X=(X_t)_{t\in[0,T]}$ satisfies Assumption \ref{a1} (i). Let $0\leq \alpha < d$ and $0\leq \beta \leq 1$ be such that 
		\eqref{eq:key-restriction} holds and let $n$ be an even integer. Then for every $0 \leq u < v\leq T$ and $x,h \in \rd$ we have
		\begin{equation}\label{eq:mom3}
			\E \lvert L^{\alpha,X}(x,[u,v])\rvert^n \leq c_9^n n^{H\left(\theta+d-\alpha\right)n}(v-u)^{\left(1-H\left(d-\alpha\right)\right)n}[\log(e+(v-u))]^{2nd\zeta}
		\end{equation}
		and
		\begin{multline}\label{eq:mom4}
			\E [L^{\alpha,X}(x+h,[u,v])-L^{\alpha,X}(x,[u,v])]^{n}\\
			\leq \|h\|^{\beta n}   c_9^n n^{H\left(\theta+d-\alpha+\beta\right)n}(v-u)^{\left(1-H\left(d-\alpha+\beta\right)\right)n}[\log(e+(v-u))]^{2nd\zeta}
		\end{multline}
		with a constant $c_9>0$ depending only on $T$, $d$, $H$, $\alpha$ and $\beta$ and where $\zeta=1$ if $\beta<\alpha$ while $\zeta=0$ if $\beta\geq \alpha$.
	\end{prop}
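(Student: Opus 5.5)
The plan is to express moments of $L^{\alpha,X}$ through Fourier transforms and then invoke Corollary \ref{cor:basic-estimate}. The key identity is the Fourier representation of the Riesz kernel: for $0<\alpha<d$ one has $\|y\|^{\alpha-d}=c_{d,\alpha}\int_{\rd}\|\xi\|^{-\alpha}e^{i\langle \xi,y\rangle}d\xi$ in the distributional sense. To make this rigorous I would regularize, for instance by replacing the kernel with $\|y\|^{\alpha-d}$ convolved with a Gaussian of variance $\varepsilon$, which gives $\hat{}$ multiplied by $e^{-\varepsilon\|\xi\|^2/2}$, and let $\varepsilon\to 0$ via Fatou or monotone convergence at the end. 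This yields
\[
L^{\alpha,X}(x,[u,v])=c\int_u^v\int_{\rd}\|\xi\|^{-\alpha}e^{i\langle \xi,X_t-x\rangle}d\xi\,dt .
\]
Taking the $n$-th power and the expectation, and using Fubini (justified in the regularized version), gives
\[
\E|L^{\alpha,X}(x,[u,v])|^n\le c^n\int_{[u,v]^n}\int_{(\rd)^n}\prod_{k=1}^n\|\xi_k\|^{-\alpha}\Big|\E\exp\Big(i\sum_j\langle\xi_j,X_{t_j}\rangle\Big)\Big|d\xi\,dt,
\]
since $|e^{-i\langle\sum\xi_j,x\rangle}|=1$. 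Applying Corollary \ref{cor:basic-estimate} with $\beta=0$ then gives \eqref{eq:mom3}. I must check that $\beta=0$ satisfies \eqref{eq:key-restriction}; that is, $-\alpha<\min(\dots)$. This is guaranteed whenever $\alpha$ satisfies \eqref{E:parameters}. Alternatively I can note that \eqref{eq:mom3} should be derived under the standing hypothesis on $\beta$, with the monotone dependence on $\beta$ taken into account, since decreasing $\beta$ only relaxes the condition. The case $\alpha=0$ is the local-time case, which is covered by \cite[Proposition 3.2]{SSV} in the same way.

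For the increment bound \eqref{eq:mom4}, the $x$-dependence enters only through the factor $e^{-i\langle\xi,x\rangle}$, so
\[
L^{\alpha,X}(x+h,\cdot)-L^{\alpha,X}(x,\cdot)=c\int\int\|\xi\|^{-\alpha}\big(e^{-i\langle\xi,x+h\rangle}-e^{-i\langle\xi,x\rangle}\big)e^{i\langle\xi,X_t\rangle}d\xi\,dt .
\]
I would use $|e^{-i\langle\xi,h\rangle}-1|\le 2^{1-\beta}\|\xi\|^\beta\|h\|^\beta$ for $\beta\in[0,1]$. Because $n$ is even, the $n$-th power equals its absolute value, and the same Fubini and triangle-inequality argument bounds the expectation by $\|h\|^{\beta n}c^n$ times the left side of \eqref{eq:bound_fourier}. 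Corollary \ref{cor:basic-estimate} then applies directly with the given $\beta$.

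I expect the main obstacle to be the rigorous justification of the Fourier representation and the exchange of integrals. The integral $\int\|\xi\|^{-\alpha}e^{i\langle\xi,y\rangle}d\xi$ is not absolutely convergent, so Fubini cannot be applied naively. The fix I would try first is the Gaussian mollification described above. With it, everything is absolutely integrable for $\varepsilon>0$, and the bounds from Corollary \ref{cor:basic-estimate} are uniform in $\varepsilon$ because $e^{-\varepsilon\|\xi\|^2/2}\le1$. For \eqref{eq:mom3}, the mollified potentials increase to, or at least converge pointwise to, the true potential wherever it is finite, so Fatou's lemma transfers the bound. For \eqref{eq:mom4}, Fatou again suffices once pointwise convergence is established, which holds almost surely since the finiteness of the potential follows from the first bound.
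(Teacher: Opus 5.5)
Your proposal is correct and follows the paper's route: the Fourier representation of the Riesz kernel, the bound $|e^{-i\langle \xi,h\rangle}-1|\le 2^{1-\beta}\|\xi\|^{\beta}\|h\|^{\beta}$ to reduce to the integral in \eqref{eq:bound_fourier}, and then Corollary \ref{cor:basic-estimate}, with $\beta=0$ for \eqref{eq:mom3} (admissible since lowering $\beta$ only relaxes \eqref{eq:key-restriction}). Your Gaussian regularization makes explicit the justification the paper delegates to \cite[proof of Theorem 5.7]{HTV2025}. For the limit $\varepsilon\to 0$ when $\alpha>2$, rely on the uniform domination $(\|\cdot\|^{\alpha-d}*p_\varepsilon)(y)\le C\|y\|^{\alpha-d}$ rather than monotonicity. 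Any mismatch in the logarithmic factor is harmless because $1\le\log(e+(v-u))\le\log(e+T)$.
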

	
	\begin{remark}
		The case $\alpha=0$ in Proposition \ref{prop:moment-bounds} was already part of \cite[Theorem 5.7]{HTV2025}.
	\end{remark}	
	
	\begin{proof}
		For any $0<\beta\leq 1$ we can proceed as in \cite[proof of Theorem 5.7]{HTV2025} to see that 
		\begin{multline}\label{E:intermed}
			\mathbb{E}[L^{\alpha,X}(x+h,[u,v])-L^{\alpha,X}(x,[u,v])]^n\\
			\leq \|h\|^{\beta n}\:
			\int_{[u,v]^n}\int_{(\mathbb{R}^d)^n}\prod_{j=1}^n \|\xi_j\|^{(\beta-\alpha)}\Big\lvert \mathbb{E} \exp \Big( i \sum_{j = 1}^n \langle \xi_j, X_{t_j}\rangle \Big) \Big\rvert\:d\xi\:dt.
		\end{multline}
		In this case \eqref{eq:mom4} is then immediate from Corollary \ref{cor:basic-estimate}. Inequality \eqref{eq:mom3} follows similarly using  Corollary \ref{cor:basic-estimate} with $\beta=0$. 
	\end{proof}
	
	The next result is a slight modification of Proposition \ref{prop:moment-bounds}, where one shifts the process in space by the random variable $X_\tau$ indexed by a fixed time point $\tau$. It differs from related results in \cite{knsv}, note that here we do not assume $X$ to have stationary increments.

	\begin{corollary}
		\label{thm:moment} Suppose that $X=(X_t)_{t\in[0,T]}$ satisfies Assumption \ref{a1} (i). Let $0\leq \alpha < d$ and $0\leq \beta \leq 1$ be such that \eqref{eq:key-restriction} holds and let $n$ be an even integer. Then for any $0 \leq u < v\leq T$, any $x,y \in \rd$ and any $\tau\in[u,v]$ we have
		\begin{equation}\label{eq:mom3alt}
			\E \lvert L^{\alpha,X}(x+X_\tau,[u,v])\rvert^n \leq c_9^n n^{H\left(\theta+d-\alpha\right)n}(v-u)^{\left(1-H\left(d-\alpha\right)\right)n}[\log(e+(v-u))]^{2nd \zeta}
		\end{equation}
		and
		\begin{multline}\label{eq:mom4alt}
			\E [L^{\alpha,X}(x+h+X_\tau,[u,v])-L^{\alpha,X}(x+X_\tau,[u,v])]^{n}  \\
			\leq \|h\|^{\beta n}   c_9^n n^{H\left(\theta+d-\alpha+\beta\right)n}(v-u)^{\left(1-H\left(d-\alpha+\beta\right)\right)n}[\log(e+(v-u))]^{2nd\zeta},
		\end{multline}
		where $c_9$ and $\zeta$ are as in Proposition \ref{prop:moment-bounds}.
	\end{corollary}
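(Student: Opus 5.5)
The plan is to redo the Fourier-analytic computation behind Proposition \ref{prop:moment-bounds} with the random shift $X_\tau$ built into the exponent. After that, the time–frequency integral is brought back to the form handled by Corollary \ref{cor:basic-estimate}.

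First, recall how \eqref{E:intermed} arises, following \cite[proof of Theorem 5.7]{HTV2025}. One writes the Riesz kernel $\|\cdot\|^{\alpha-d}$ through its Fourier transform, which is a constant multiple of $\|\xi\|^{-\alpha}$. This gives
\[L^{\alpha,X}(x+X_\tau,[u,v]) = c\int_u^v\int_{\rd} \|\xi\|^{-\alpha} e^{i\langle \xi, x+X_\tau - X_t\rangle}\,d\xi\,dt,\]
understood in the usual regularized sense, for instance with a Gaussian cutoff removed by monotone or Fatou arguments. For the increment in $h$ one gets the factor $e^{i\langle\xi,h\rangle}-1$, bounded by $2^{1-\beta}\|h\|^\beta\|\xi\|^\beta$. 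Taking the $n$-th power (with $n$ even), expanding as an $n$-fold integral and taking expectations, the modulus of the integrand is controlled by
\[\prod_j\|\xi_j\|^{\beta-\alpha}\,\Big|\E\exp\Big(i\sum_{j=1}^n\langle \xi_j, X_{t_j}-X_\tau\rangle\Big)\Big|.\]
The deterministic phase $e^{i\langle \sum\xi_j, x\rangle}$ has modulus one and drops out.

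The key step is to control $\big|\E\exp(i\sum_j\langle\xi_j,X_{t_j}-X_\tau\rangle)\big|$ by Assumption \ref{a1} (i) in the same way as in Proposition \ref{prop:basic-estimate}. Fix an ordering $t_1<\dots<t_n$ of the time points in $[u,v]$ and insert $\tau$ into it, say $t_m\le\tau<t_{m+1}$. The exponent then becomes a sum of increments over the refined partition: increments to the right of $\tau$ carry the tail sums $\eta_k=\sum_{l\ge k}\xi_l$, and increments to the left carry the head sums $-\sum_{l\le k}\xi_l$. The increment $[t_m,\tau]$ carries $\sum_{l\le m}\xi_l$ with a sign. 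Because $\sum_j\xi_j$ no longer multiplies the base point, no total-sum constraint appears.

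To use Assumption \ref{a1} (i), I would choose the exponent $0$ for the extra piece $[t_m,\tau]$, which is allowed since $k_{j,\ell}\in\{0,4\}$. Then $\tau$ only enters through a merged interval, and the constant changes from $c_0^n n^{H\theta n}$ to $c_0^{n+1}(n+1)^{H\theta(n+1)}$, which is absorbed into $c_9^n n^{H\theta n}$. On each side of $\tau$ we now have a linear change of variables of determinant one, between the head sums (to the left of $\tau$) or tail sums (to the right) and the $\xi_j$. In these variables the same partition-into-words argument as in \eqref{E:inwords}–\eqref{E:claim} runs verbatim. The induction on the left side goes outward from $\tau$: it starts at the innermost $\eta$ near $\tau$ and passes the log factor in the same way. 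Hence the inequality \eqref{E:desired} holds for the shifted characteristic function as well.

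Finally, exactly as in Corollary \ref{cor:basic-estimate}, Hölder's inequality with $q'$ from \eqref{E:qprime} (or \cite[Proposition 3.2]{SSV} when $\beta\ge\alpha$) gives the bound for the right-hand side with the stated powers. This yields \eqref{eq:mom4alt}, and \eqref{eq:mom3alt} follows with $\beta=0$.

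The main obstacle is the middle step. One has to check that the two-sided ordering around $\tau$ really reproduces the structure of \eqref{E:ansatz}. In particular, the piece of length $\tau-t_m$ must not produce a factor like $(\tau-t_m)^{-Hd}$, which could fail to be integrable in combination with the others. Choosing $k=0$ on that piece and bounding the time integral by the product of Beta integrals over the two sides avoids this. The resulting loss is only a constant to the power $n$, coming from splitting the simplex at $\tau$ and summing over $m$ with binomial weights, and that is absorbed into $c_9^n$.
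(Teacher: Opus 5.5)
There is a genuine gap in the middle step, and it comes from the device you use to avoid a factor $(\tau-t_m)^{-Hd}$. In your variables $(\zeta_1,\dots,\zeta_m,\eta_{m+1},\dots,\eta_n)$ with $\zeta_k=\sum_{l\le k}\xi_l$, each variable is carried by exactly one increment:
\begin{itemize}
\item $\zeta_k$ by $X_{t_{k+1}}-X_{t_k}$ for $k<m$;
\item $\zeta_m$ by $X_\tau-X_{t_m}$ alone;
\item $\eta_{m+1}$ by $X_{t_{m+1}}-X_\tau$;
\item $\eta_k$ by $X_{t_k}-X_{t_{k-1}}$ for $k>m+1$.
\end{itemize}
The initial piece of the partition, starting at $0$, carries the coefficient $0$. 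So $[t_m,\tau]$ is not an extra piece: it takes over the role that the first increment played in \eqref{E:ansatz}.

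If you fix $k_{\cdot,\ell}=0$ on $[t_m,\tau]$, the bound from Assumption \ref{a1} (i) no longer depends on $\zeta_m$. Then $\zeta_m$ enters the integrand only through $\psi_\gamma(\|\zeta_m-\zeta_{m-1}\|)^{q'}$, or through $\|\zeta_m-\zeta_{m-1}\|^{\beta-\alpha}$ before H\"older. Since $\psi_\gamma(s)=\log(e+s)$ for $s\ge 1$, respectively since $\beta-\alpha>-d$, the $\zeta_m$-integral over $\mathbb{R}^d$ is infinite and your bound is useless. This is exactly why, in the proof of \eqref{E:claim}, the exponent $0$ is used only on the unit ball $D_0(\eta_{k+1})$, while $\kappa_{k,\ell}=4z_k^\ell$ is indispensable on $D_1(\eta_{k+1})$. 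The word argument therefore does not run verbatim with a frozen zero exponent on one of the $n$ carrying pieces.

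The fix is to put $[t_m,\tau]$ through the word decomposition like every other piece. The factor you were worried about is harmless. Because of the power $1/q'$ in \eqref{E:desired}, each of the $n$ carrying pieces $I$ contributes $|I|^{-a}$ with $a=Hd/q'<1$, by \eqref{E:intermedcond} and \eqref{E:qprime} (or $a=Hd<1$ when $\beta\ge\alpha$). It never contributes $(\tau-t_m)^{-Hd}$.

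With $\tau$ fixed, the time integral over $u\le t_1<\dots<t_m\le\tau<t_{m+1}<\dots<t_n\le v$ factorizes into two Dirichlet integrals:
\begin{itemize}
\item on the left, $\Gamma(1-a)^m(\tau-u)^{m(1-a)}/\Gamma(1+m(1-a))$ for the gaps $t_2-t_1,\dots,\tau-t_m$, where the gap $t_1-u$ carries no singularity;
\item on the right, the analogous expression with $n-m$ and $v-\tau$.
\end{itemize}
Since $n!/(\Gamma(1+m(1-a))\Gamma(1+(n-m)(1-a)))\le C^n n^{an}$ and $m$ takes only $n+1$ values, \eqref{E:desired} follows.

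For comparison, the paper's proof is only the remark that the last line of \eqref{E:ansatz} involves increments alone (with $t_0=u$). That makes the shift invisible when $\tau=u$, which is the only case used in the proof of Theorem \ref{main}, where $\tau=t$ is the left endpoint of $C_n$. Your two-sided splitting, once corrected as above, is what is actually needed for interior $\tau$.
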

	\begin{proof} Since the addition of $X_\tau$ leaves the increments $X_{t_j}-X_{t_{j-1}}$ in the last line of \eqref{E:ansatz} unchanged,  Proposition \ref{prop:basic-estimate}, Corollary \ref{cor:basic-estimate} and Proposition \ref{prop:moment-bounds} remain valid with $X+X_\tau$ in place of $X$.
	\end{proof}

	Combining Proposition \ref{prop:moment-bounds} and the Kolmogorov criterion \cite[Theorem 1.4.4]{Kunita1990},
	we obtain the joint H\"older continuity of $L^{\alpha,X}$. This result may be of independent interest, it generalizes the continuity results in
	\cite[Theorem 5.7]{HTV2025} and \cite[Corollary 3.4]{SSV} beyond the Gaussian case respectively the local time regime.
	\begin{theorem}\label{thm:holder} 
		Suppose that $X=(X_t)_{t\in[0,T]}$ satisfies Assumption \ref{a1} (i). Let $0\leq \alpha < d$ and $0\leq \beta \leq 1$ be such that 
		\eqref{eq:key-restriction} holds. The random field $(x,t)\mapsto L^{\alpha,X}(x,[0,t])$ (has a modification which) is $\mathbb{P}$-a.s. jointly H\"older continuous in $(x,t)\in \mathbb{R}^d\times [0,T]$. 
		
		More precisely, for any cube $Q\subset \mathbb{R}^d$, any $0<\gamma_1<\beta$ and any $0<\gamma_2<1-H(d-\alpha)$  there exists a random variable $A$ such that $\mathbb{P}$-a.s. we have 
		\[|L^{\alpha,X}(x+h,[u,v])-L^{\alpha,X}(x,[u,v])|\leq A\|h\|^{\gamma_1}(v-u)^{\gamma_2}\]
		for all $x,x+h\in Q$ and all $0\leq u<v\leq T$.
	\end{theorem}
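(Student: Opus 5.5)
We apply a multiparameter Kolmogorov--Chentsov criterion (\cite[Theorem 1.4.4]{Kunita1990}) to the two-parameter field $(x,t)\mapsto L^{\alpha,X}(x,[0,t])$ on $Q\times[0,T]$. The moment bounds of Proposition \ref{prop:moment-bounds} supply the required estimates.

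\textbf{Step 1: increments of the field.} Since $L^{\alpha,X}(x,[0,v])-L^{\alpha,X}(x,[0,u])=L^{\alpha,X}(x,[u,v])$, which follows from the additivity of \eqref{E:measuresonline} (or of local times for $\alpha=0$), a mixed increment of the field is
\[
L^{\alpha,X}(x',[0,t'])-L^{\alpha,X}(x,[0,t]) = \bigl(L^{\alpha,X}(x',[0,t'])-L^{\alpha,X}(x,[0,t'])\bigr)+L^{\alpha,X}(x,[t,t']).
\]
For even $n$, \eqref{eq:mom4} applied with $[u,v]=[0,t']$ gives
\[
\E|\cdot|^n\le C_n\|x'-x\|^{\beta n},
\]
and \eqref{eq:mom3} gives
\[
\E|\cdot|^n\le C_n|t'-t|^{(1-H(d-\alpha))n}[\log(e+T)]^{2nd\zeta}.
\]
Because $n$ can be taken arbitrarily large, the Kolmogorov criterion with anisotropic exponents yields a continuous modification that is H\"older of any order $<\beta$ in $x$ and any order $<1-H(d-\alpha)$ in $t$, uniformly on $Q\times[0,T]$. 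Note that the exponent $\beta n$ must exceed the dimension of the parameter set; this holds for large $n$.

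\textbf{Step 2: the product form.} The bound $A\|h\|^{\gamma_1}(v-u)^{\gamma_2}$ is stronger than joint H\"older continuity. The natural tool is the rectangular increment
\[
\Delta(x,h;u,v):=L^{\alpha,X}(x+h,[u,v])-L^{\alpha,X}(x,[u,v]),
\]
which is exactly a two-parameter (in $(x,t)$) rectangular increment of the field. By \eqref{eq:mom4},
\[
\E\Delta^n\le C_n\|h\|^{\beta n}(v-u)^{(1-H(d-\alpha+\beta))n}.
\]
I would therefore use a Garsia--Rodemich--Rumsey or chaining argument for rectangular increments, in the spirit of the multiparameter Kolmogorov theorem for Brownian sheet type fields, applied on dyadic rectangles in $Q\times[0,T]$. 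Combined with the one-parameter bounds from Step 1, this gives the product bound with time exponent any $\gamma_2<1-H(d-\alpha+\beta)$ and space exponent any $\gamma_1<\beta$.

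To reach time exponents $\gamma_2$ up to $1-H(d-\alpha)$, I would interpolate. The difference $\Delta$ is bounded both by $2\sup_x|L^{\alpha,X}(x,[u,v])|$, which carries time order near $1-H(d-\alpha)$ by the moment bound \eqref{eq:mom3} and Step 1, and by the mixed bound above. Replacing $\beta$ by a smaller $\beta'$, with $\gamma_1<\beta'$, and using the freedom in the parameter choice then recovers the stated range. Equivalently, for fixed $\gamma_1<\beta$ one applies the mixed estimate with some $\beta'\in(\gamma_1,\beta]$ chosen so that $1-H(d-\alpha+\beta')$ is close to $1-H(d-\alpha)$, at the cost of accepting $\gamma_1$ close to $\beta'$.

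\textbf{Main obstacle.} The rectangular chaining is the delicate part. In particular, the exact exponent range claimed in the statement (any $\gamma_2<1-H(d-\alpha)$ jointly with any $\gamma_1<\beta$) may require the interpolation between the two moment bounds to be carried out carefully at the level of moments before chaining, rather than after. The logarithmic factors and the constants $n^{H(\cdot)n}$ are harmless here, because $n$ is fixed.
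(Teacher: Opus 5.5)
\textbf{Overall approach.} Your route is the paper's. Its entire proof is ``Proposition \ref{prop:moment-bounds} plus the Kolmogorov criterion'', and your Step 1 together with the dyadic rectangle chaining driven by \eqref{eq:mom4} is a correct and more explicit version of that argument. A Kolmogorov criterion applied to $(x,t)\mapsto L^{\alpha,X}(x,[0,t])$ only controls $\|x-x'\|$ and $|t-t'|$ separately, so the rectangle chaining you describe is genuinely needed for the product form.

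\textbf{Where Step 2 fails.} That argument gives $\gamma_1<\beta$ together with $\gamma_2<1-H(d-\alpha+\beta)$, the time exponent of \eqref{eq:mom4}. The interpolation paragraph, which is meant to push $\gamma_2$ up to $1-H(d-\alpha)$, does not work. Pointwise interpolation $|\Delta|\le (2\sup_{y\in Q}L^{\alpha,X}(y,[u,v]))^{1-\lambda}|\Delta|^{\lambda}$ gives space exponent $\lambda\gamma_1'$ and the corresponding convex combination of the two time exponents. So $\gamma_1$ near $\beta$ forces $\lambda$ near $1$, and hence $\gamma_2$ near $1-H(d-\alpha+\beta)$. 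Replacing $\beta$ by $\beta'\in(\gamma_1,\beta]$ gives $\gamma_2<1-H(d-\alpha+\beta')<1-H(d-\alpha+\gamma_1)$. Your requirement that $1-H(d-\alpha+\beta')$ be close to $1-H(d-\alpha)$ forces $\beta'$, and hence $\gamma_1$, close to $0$, so that sentence is self-contradictory. Both variants stay inside $\gamma_2+H\gamma_1<1-H(d-\alpha)$.

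\textbf{The stated range is false.} This cannot be repaired, because the joint range in the statement does not hold. Let $X$ be a one-dimensional Brownian motion, so $d=1$, $H=\frac12$, $\theta=0$, and take $\alpha=0$. Then \eqref{eq:key-restriction} permits $\beta=0.45$; take $\gamma_1=0.4$ and $\gamma_2=0.45<\frac12$. Fix $u\in(0,T)$ and a large cube $Q$. By the Markov property at time $u$ and Brownian scaling, the following event has probability at least some $p>0$ independent of small $r$: $L^{0,X}(X_u,[u,u+r])\ge c\sqrt r$, $\sup_{s\in[u,u+r]}|X_s-X_u|\le\sqrt r$, and $X_u,\,X_u+2\sqrt r\in Q$. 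On this event $X_u+2\sqrt r\notin X([u,u+r])$, so the local time there vanishes. The increment with $h=2\sqrt r$ is therefore at least $c\sqrt r$, while the asserted bound is $A\,2^{0.4}r^{0.65}$. Hence $\mathbb{P}(A\ge c\,2^{-0.4}r^{-0.15})\ge p$ for all small $r$, so $A=\infty$ with positive probability.

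\textbf{What to keep.} The paper's one-line proof also only supports the time exponent of \eqref{eq:mom4}. The statement should read $\gamma_2<1-H(d-\alpha+\beta)$, or $\gamma_2<1-H(d-\alpha+\gamma_1)$ after applying \eqref{eq:mom4} with $\beta'\downarrow\gamma_1$. For that corrected statement, your Step 1 plus the rectangle chaining is a complete proof; drop the interpolation.
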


	By Chebyshev's inequality these moment bounds translate  into the following tail estimates. We omit the standard proof.
	
	\begin{corollary}\label{cor:tail1}
		Suppose that $X=(X_t)_{t\in[0,T]}$ satisfies Assumption \ref{a1} (i). Let $0\leq \alpha < d$ and $0\leq \beta \leq 1$ be such that \eqref{eq:key-restriction} holds, and let $\zeta$ be as in  Proposition \ref{prop:moment-bounds}.
		\begin{enumerate}	
			\item[(i)]  There are positive constants $c_{10}$ and $c_{11}$ such that for any $x \in \rd$, any subinterval $I\subset [0,T]$ and any $u>0$ we have
			\begin{equation}
				\label{eq:tail-L0}
				\mathbb{P}\Big(L^{\alpha,X}(x,I)\geq \lvert I\rvert ^{1-H(d-\alpha)}[\log(e+|I|)]^{2d\zeta} u^{H(\theta+d-\alpha)}\Big) \leq c_{10}\exp(-c_{11}u),
			\end{equation}
			and this remains true if $x$ is replaced by $x+X_\tau$ with $\tau\in I$.
			
			\item[(ii)] There are positive constants $c_{10}$ and $c_{11}$ such that for any $x,y \in \rd$, any subinterval $I\subset [0,T]$ and any $u>0$ we have
			\begin{multline}
				\label{eq:tail-inc0}
				\mathbb{P}\Big(\lvert L^{\alpha,X}(x,I)-L^{\alpha,X}(y,I)\rvert \geq  \lvert I\rvert ^{1-H(d-\alpha+\beta)}[\log(e+|I|)]^{2d\zeta} \lvert x-y\rvert ^{\beta} u^{H(\theta+d-\alpha+\beta)}\Big)\\
				\leq c_{10}\exp(-c_{11}u),
			\end{multline}
			and this remains true if $x$ and $y$ are replaced by $x+X_\tau$ respectively $y+X_\tau$ with $\tau\in I$.
		\end{enumerate}
	\end{corollary}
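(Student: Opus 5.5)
The tail bounds in Corollary \ref{cor:tail1} follow from the moment bounds of Proposition \ref{prop:moment-bounds} and Corollary \ref{thm:moment} through Chebyshev's inequality applied with an optimized even moment $n$. The same argument gives all four statements: (i) and (ii), each with and without the shift by $X_\tau$.

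For (i), fix $x$ and $I=[u,v]$, and write $Y:=L^{\alpha,X}(x,I)$ and $a:=|I|^{1-H(d-\alpha)}[\log(e+|I|)]^{2d\zeta}$. By \eqref{eq:mom3}, for every even $n$,
\[
\mathbb{E}|Y|^n\leq c_9^n n^{H(\theta+d-\alpha)n}a^n .
\]
Set $\kappa:=H(\theta+d-\alpha)$. Chebyshev's inequality then gives
\[
\mathbb{P}(Y\geq a u^{\kappa})\leq \frac{\mathbb{E}|Y|^n}{a^n u^{\kappa n}}\leq \Big(\frac{c_9\, n^{\kappa}}{u^{\kappa}}\Big)^n .
\]
If $\kappa=0$, which can only happen when $\theta=0$ and $\alpha=d$, the case is excluded, so $\kappa>0$. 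Choose $n$ to be the largest even integer with $n\leq \varepsilon u$, where $\varepsilon>0$ is small enough that $c_9\varepsilon^{\kappa}\leq e^{-1}$. The bound becomes $e^{-n}\leq e^{2}e^{-\varepsilon u}$ as soon as $\varepsilon u\geq 2$.

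For $u<2/\varepsilon$, the probability is trivially at most $1\leq e^{2}e^{-\varepsilon u}$. Hence $c_{10}=e^2$ and $c_{11}=\varepsilon$ work, and these constants depend only on $c_9$ and $\kappa$, hence only on $T,d,H,\alpha,\beta$ (and $\theta,c_0$). The shifted version $x+X_\tau$ is identical, using \eqref{eq:mom3alt} in place of \eqref{eq:mom3}.

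For (ii), apply the same procedure to $Y:=L^{\alpha,X}(x,I)-L^{\alpha,X}(y,I)$ with $h=x-y$, using \eqref{eq:mom4}, respectively \eqref{eq:mom4alt}. The normalization $a$ now carries the extra factor $\|x-y\|^{\beta}$ and has exponent $1-H(d-\alpha+\beta)$, and the exponent becomes $\kappa=H(\theta+d-\alpha+\beta)>0$. Since $n$ is even, $Y^n=|Y|^n$, so Chebyshev's inequality applies directly to the absolute value, and the rest of the computation is verbatim.

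The only delicate point is the bookkeeping of the optimization in $n$: one has to ensure uniformity in $x$, $y$, $I$ and $\tau$, and this holds because the constants in Proposition \ref{prop:moment-bounds} and Corollary \ref{thm:moment} are independent of these quantities. One also has to handle small $u$ and the parity restriction on $n$, and both are absorbed into $c_{10}$.
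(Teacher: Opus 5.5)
Your proposal is correct and is exactly the standard argument the paper has in mind: the paper only says that the moment bounds of Proposition \ref{prop:moment-bounds} and Corollary \ref{thm:moment} translate into the tail estimates by Chebyshev's inequality and omits the proof, and you apply Chebyshev with $n$ the largest even integer below $\varepsilon u$, absorbing small $u$ and the parity restriction into $c_{10}=e^2$, $c_{11}=\varepsilon$. One tacit point, which the paper's statement shares, is that the threshold must be positive, i.e.\ $|I|>0$ and, in (ii), $x\neq y$: otherwise the event has probability one and the division by $a^n u^{\kappa n}$ is not available.
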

	
	Theorem \ref{main} now follows using a standard chaining method, see e.g. \cite[Theorem 1.2]{Xiao1997} or \cite[Theorem 1.4]{knsv}. However, in comparison to earlier related works the exponents behave quite differently and the logarithmic terms have to be taken into account. We therefore present the main arguments.
	
	\begin{proof}[Proof of Theorem \ref{main}]
		For $Hd<1$ and $\alpha=0$ the claim of Theorem \ref{main} is a restatement of \cite[Theorem 2.1]{SSV}, in line with the $\zeta=0$ case of the preceding statements. 
		We provide a proof of Theorem \ref{main} for the case not covered by \cite[Theorem 2.1]{SSV}, where $H\in(0,1)$ and
		\[\max\big(0,d-\frac{1}{H}\big)<\alpha<d.\]
		Clearly $L^{\alpha,X}(x,[t-r,t+r])=L^{\alpha,X}(x,[t-r,t])+L^{\alpha,X}(x,[t,t+r])$. We consider $t\in (0,T)$ as fixed and prove \eqref{eq:main-fixed-s} with the second summand in place of $L^{\alpha,X}(x,[t-r,t+r])$; its proof for the first summand follows by obvious modifications. By Assumption \ref{a1} (ii) and the Kolmogorov-Chentsov theorem we may restrict our attention to an event $\Omega_0$ of probability one and assume that $X$ is continuous. As a consequence, $X([t,t+r])$ is compact, hence closed, so that $\supp\mu_{t,t+r}^X=X([t,t+r])$. The first maximum principle, cf. \cite[Section I.3, Theorem 1.5]{Landkof1972}, ensures that
		\[\sup_{x\in\mathbb{R}^d}L^{\alpha,X}(x,[t,t+r])\leq 2^{d-\alpha}\sup_{x\in X([t,t+r])}L^{\alpha,X}(x,[t,t+r]).\]
		Now let 
		\[g(r):= r^{1-H (d-\alpha)}[\log (e+r)]^2 [\log \log r^{-1}]^{H(\theta+d-\alpha)},\qquad 0<r<1.\] 
		Since 
		\[\lim_{r\to 0} \frac{g(r)}{r^{1-H (d-\alpha)} (\log \log r^{-1})^{H(\theta+d-\alpha)}} = 1,\]	
		$g$ satisfies a doubling property and $r\mapsto \sup_{x\in X([t,t+r])}L^{\alpha,X}(x,[t,t+r])$ is increasing in $r$, 
		it suffices to prove that there is a constant $c_{12}>0$ such that 
		\begin{equation}\label{E:target}
			\limsup_{n\to\infty} \sup_{x\in X(C_n)}\frac{L^{\alpha,X}(x,C_n)}{g(2^{-n})} <c_{12}\quad \text{$\mathbb{P}$-a.s.},
		\end{equation}
		where $C_n := [t,t+2^{-n}]$. 
		
		By \cite[Corollary 2.11]{Nummi-Viitasaari} there is an event $\Omega_1\subset \Omega_0$ of probability one and a constant $c_{13}>0$ such that for all $\omega\in \Omega_1$ there is some  $n_1(\omega)\in\mathbb{N}$ such that
		\[\sup_{s\in C_n}\| X_s(\omega)-X_t(\omega)\| \leq c_{13} 2^{-nH} n^{\iota},\qquad n\geq n_1(\omega).\]
		Setting 
		\begin{equation}\label{E:lambdan}
			\lambda_n := 2^{-nH}(\log \log 2^n)^{-H}
		\end{equation} 
		and 
		\[G_n :=\{x \in \lambda_n\Z^d : \| x\| \leq c_{13}2^{-nH}n^{\iota}\},\]
		we find that 
		\begin{equation}\label{E:volume}		
			\# G_n \leq c_{14} (\log n)^{H d}n^{\iota d}. 
		\end{equation}		
		
		Let $c_{15}>0$ be large enough to have $c_{16}:=c_{11}c_{15}^{1/(H(\theta+d-\alpha))}>\iota d+2$. Using \eqref{eq:tail-L0} with $\zeta=1$ and with $u_n^{H(\theta+d-\alpha)}=c_{15}(\log(n\log 2))^{H(\theta+d-\alpha)}$ in place of $u$, we obtain 
		\[\mathbb{P}\Big(L^{\alpha,X}(x+X_t,C_n) \geq c_{15} g(2^{-n})\Big)\leq c_{10}n^{-c_{16}}.\]
		
		Together with \eqref{E:volume} this gives 
		\[\mathbb{P}\Big(\max_{x\in G_n} L^{\alpha,X}(x+X_t,C_n) \geq c_{15} g(2^{-n})\Big)\leq c_{10} (\log n)^{H d} n^{\iota d -c_{16}},\]
		which is summable. By Borel-Cantelli we can find an event $\Omega_2\subset \Omega_1$ of probability one and $n_2(\omega)\geq n_1(\omega)$, $\omega \in \Omega_2$, such that 
		\begin{equation}
			\label{eq:max-L-bounded}
			\max_{x\in G_n} L^{\alpha,X}(x+X_t,C_n)(\omega) \leq c_{15} g(2^{-n})\qquad \text{for all $n\geq n_2(\omega)$}.
		\end{equation}
		
		Given integers $n,k\geq 1$ and a point $x\in G_n$, we set
		\[F(n,k,x) := \{ y = x + \lambda_n \sum_{j=1}^k \varepsilon_j2^{-j} \;:\; \varepsilon_j \in \{0,1\}^d, 1\leq j \leq k\}.\]
		Similarly as in \cite{Xiao1997} we call a pair of two points $y_1,y_2 \in F(n,k,x)$ linked if $y_2-y_1 = \lambda_n \varepsilon 2^{-k}$ for $\varepsilon \in\{0,1\}^d$. Let $0<\beta<\alpha$ satisfy \eqref{eq:key-restriction}, choose $\delta >0$ such that 
		\begin{equation}\label{E:deltacond}		
			\delta [H\theta  + H(d-\alpha+\beta)]<\beta
		\end{equation}		
		and set
		\begin{multline}\label{E:Bns}
			B_n := \bigcup_{x\in G_n}\bigcup_{k=1}^\infty \bigcup_{y_1,y_2} \Big\lbrace \lvert L^{\alpha,X}(y_1+X_s,C_n)-L^{\alpha,X}(y_2+X_s,C_n)\rvert \\
			\geq 2^{-n(1-H(d-\alpha+\beta))}[\log 2^n]^2\|y_1-y_2\|^{\beta}(c_{17}2^{\delta k}\log n)^{H\theta  + H(d-\alpha+\beta)}\Big\rbrace ,
		\end{multline}
		where $\bigcup_{y_1,y_2}$ is the union over all linked pairs $y_1,y_2\in F(n,k,x)$ and $c_{17}>0$. Now \eqref{eq:tail-inc0} with $u = c_{17}2^{\delta k}\log n$ gives
		\[\mathbb{P}(B_n) \leq c_{14}(\log n)^{H d}n^{\iota d}\sum_{k=1}^\infty 4^{kd}\exp\left(-c_{18}2^{\delta k}\log n\right),\]
		where $c_{18}:=c_{11}c_{17}$. Here we have used \eqref{E:volume} and that for each $k$ there are no more than $4^{kd}$ linked pairs $y_1,y_2$. Choosing $c_{17}$ large enough, we can ensure that $n^{-c_{18}2^{\delta k}} \leq n^{-pk}$, $k\geq 1$, with a fixed number $p>\iota d+2$. 
		For all $n$ with $4^d < n^p$ the summation over $k$ then gives  
		\[\sum_{k=1}^\infty 4^{kd}\exp\left(-c_{18}2^{\delta k}\log n\right) \leq \sum_{k=1}^\infty 4^{kd}n^{-pk} \leq \frac{4^{d}}{n^p-4^d}\]
		Now the product of the right-hand side and $(\log n)^{H d}n^{\iota d}$ is summable with respect to $n$, 
		so that
		\[\sum_{n=2}^\infty \mathbb{P}(B_n) \leq c_{14}
		\sum_{n=2}^\infty(\log n)^{H d}n^{\iota d}\sum_{k=1}^\infty 4^{kd}\exp\left(-c_{18}2^{\delta k}\log n\right) < \infty\]
		and by Borel-Cantelli the event $B_n$ occurs only finitely many times.

		Let $n$ be fixed and assume that $y \in \rd$ satisfies $\| y\|  \leq 2c_12^{-n H} n^{\iota}$. Then we may represent $y$ as $y = \lim_{k\to\infty}  y_k$ with
		\begin{equation}\label{E:yks}
			y_k := x + \lambda_n \sum_{j=1}^k \varepsilon_j 2^{-j},\qquad k\geq 1,
		\end{equation}
		where $\varepsilon_j \in \{0,1\}^d$ and $x \in G_n$; we complement this by setting $y_0:= x$. By the preceding we can find an event $\Omega_3\subset \Omega_2$ of probability one and $n_3(\omega)\geq n_2(\omega)$, $\omega \in \Omega_3$, such that for all $\omega\in \Omega_3$ and all $n\geq n_3(\omega)$ we have 
		\begin{equation*}
			\begin{split}
				\lvert L^{\alpha,X}(x+X_t,C_n)&(\omega)-L^{\alpha,X}(y+X_t,C_n)(\omega)\rvert  \notag\\ 
				&\leq \sum_{k=1}^\infty \lvert L^{\alpha,X}(y_k+X_t,C_n)(\omega)-L^{\alpha,X}(y_{k-1}+X_t,C_n)(\omega)\rvert  \\
				&\leq \sum_{k=1}^\infty 2^{-n(1-H(d-\alpha+\beta))}[\log 2^n]^2\|y_k-y_{k-1}\|^{\beta}(c_{17}2^{\delta k}\log n)^{H\theta  + H(d-\alpha+\beta)} \\
				&\leq c_{19} 2^{-n(1-H(d-\alpha+\beta))}[\log 2^n]^2\lambda_n^{\beta}\sum_{k=1}^\infty 2^{-k\beta}(c_{17}2^{\delta k}\log n)^{H\theta  + H(d-\alpha+\beta)} \\
				& \leq c_{20} 2^{-n(1-H(d-\alpha))}[\log 2^n]^2 (\log \log 2^n)^{-\beta H}\sum_{k=1}^\infty 2^{-k\beta}(c_{17}2^{\delta k}\log n)^{H\theta  + H(d-\alpha+\beta)} \\
				& \leq c_{21} 2^{-n(1-H(d-\alpha))} [\log 2^n]^2(\log \log 2^n)^{H\theta  + H(d-\alpha)} \sum_{k=1}^\infty 2^{(\delta [H\theta  + H(d-\alpha+\beta)]-\beta)k} \\
				& \leq c_{22}g(2^{-n});
			\end{split}
		\end{equation*}
		here we have used \eqref{E:Bns}, \eqref{E:yks} and \eqref{E:lambdan}. The last inequality follows from 
		\eqref{E:deltacond}. Combining this with \eqref{eq:max-L-bounded} then yields
		\[\sup_{\|x\|\leq 2c_12^{-nH} n^{\iota}} L^{\alpha,X}(x+X_s,C_n)(\omega) \leq c_{12} g(2^{-n}),\qquad \omega\in \Omega_3,\ n\geq n_3(\omega),\]
		or, in other words,
		\[\sup_{\|x-X_s\|\leq 2c_12^{-nH} n^{\iota}} L^{\alpha,X}(x,C_n)(\omega) \leq c_{12}g(2^{-n}),\qquad \omega\in \Omega_3,\ n\geq n_3(\omega).\]
		This gives \eqref{E:target} and therefore \eqref{eq:main-fixed-s}. 
		
		The limit relation \eqref{eq:main-sup-s} follows similarly by straightforward modifications of the arguments, see for instance \cite[Theorem 1.4]{knsv} or \cite[Theorem 1.2]{Xiao1997}. We leave the details to the reader.
	\end{proof}
	
	We prove Theorem  \ref{cm2}.
	
	\begin{proof}[Proof of Theorem \ref{cm2}] As before, we may assume that $\alpha>0$. For any sufficiently small $r>0$ we have $(t-r,t+r)\subset [0,T]$ and, using \eqref{eq:main-fixed-s},
\begin{align}
2r\:\Big(\sup_{s\in (t-r,t+r)}\|X_t-X_s\|\Big)^{\alpha-d}&\leq \int_{t-r}^{t+r}\|X_t-X_s\|^{\alpha-d}ds\notag\\
&=\frac{d\omega_d}{\alpha}L^{\alpha,X}(X_t,[t-r,t+r])\notag\\
&\leq \frac{d\omega_d}{\alpha}c_2(t)\:r^{1-H(d-\alpha)}(\log\log r^{-1})^{H(\theta+d-\alpha)}.\notag
\end{align}	
Rearranging, we obtain \eqref{E:output}. Using (\ref{eq:main-sup-s}) instead of \eqref{eq:main-fixed-s} gives \eqref{E:output2}.
	\end{proof}

	\noindent
	\textbf{Disclaimer.}
	Co-funded by the European Union. Views and opinions expressed are however those of the authors only and do not necessarily reflect those of
	the European Union or the European Education and Culture Executive Agency (EACEA). Neither the European Union nor EACEA can be held responsible for them.

%	\bibliographystyle{abbrv}
%	\bibliography{lit.bib}

\end{document}